\documentclass[a4paper,11pt]{article}

\usepackage[margin=1in]{geometry}

\usepackage{amsmath}
\usepackage{amssymb}
\usepackage{amsfonts}
\usepackage{amsthm}
\usepackage{mathrsfs} 
\usepackage{mathtools}
\usepackage{graphicx}
\usepackage{color}
\usepackage{bm}
\usepackage{xcolor}
\usepackage{hyperref}
\usepackage{fontenc}
\usepackage[normalem]{ulem}
\usepackage[mathlines]{lineno}
\usepackage{subcaption}

\usepackage{algorithm}
\usepackage{algpseudocode}

\newtheorem{theorem}{Theorem}

\newtheorem{proposition}{Proposition}
\newtheorem{corollary}{Corollary}

\newtheorem{definition}{Definition}
\newtheorem{assumption}{Assumption}
\newtheorem{remark}{Remark}

\newtheorem{example}{Example}

\newcommand{\mk}[1]{{\color{black}  #1}}

\newcommand{\ks}[1]{{\color{black} #1}}
\definecolor{chatdarkgreen}{RGB}{0,100,0}
\newcommand{\chat}[1]{{\color{black}#1}}
\definecolor{claudepurple}{RGB}{128,0,128}
\newcommand{\claude}[1]{{\color{black}#1}}

\newcommand{\M}{\mathcal M}
\newcommand{\X}{\mathcal X}
\newcommand{\Xzero}{\mathcal X_0}

\newcommand{\Rp}{[0, \infty)}
\newcommand{\Sinf}{\mathcal S_\infty}

\newcommand{\R}{\mathbb{R}} 
\newcommand{\N}{\mathbb{N}} 

\usepackage[
style=ieee,
sorting=nyt
]{biblatex}
\newcommand\Set[2]{\left\{\,#1\mbox{ }\middle|\mbox{ }#2\,\right\}}
\newcommand{\defeq}{:=}

\newcommand{\mass}{\text{mass }}

\title{\bf Bounding extreme events in dynamical systems over an infinite time horizon}

\begin{document}

\author{Karol\'{\i}na Sehnalov\'a$^1$ \and   Milan Korda$^{1,2}$ }

\footnotetext[1]{Faculty of Electrical Engineering, Czech Technical University in Prague, Technick\'a 2, CZ-16627 Prague, Czechia.}
\footnotetext[2]{CNRS; LAAS; Universit\'e de Toulouse, 7 avenue du colonel Roche, F-31400 Toulouse, France. }




\date{Draft of \today}

\maketitle

\begin{abstract}
This work addresses the computation of extreme values of observables along trajectories of  dynamical systems over an \ks{infinite time horizon}. The setting is intrinsically global: trajectories may exhibit large transient excursions, approach their largest values only asymptotically as time tends to infinity, or escape to infinity in finite time. The proposed framework treats these \mk{ scenarios} in a unified way, so that the resulting extreme values reflect both the transient and the asymptotic behavior of the dynamics. The main difficulty is that neither the time interval nor the state space is assumed to be compact. We address this by combining a radial compactification of the state space with a compactified clock variable and discounted occupation measures. This yields a  description of trajectory ensembles on compact sets via occupation measures and the associated Liouville's equation. Extreme values are then characterized through a family of reachability problems for level sets of the observable, which can be solved by a bisection procedure. On the theoretical side, we prove a superposition principle for an infinite-horizon Liouville's equation with a time-distributed terminal measure. This result shows that the measure formulation represents ensembles of stopped admissible trajectories, including trajectories \ks{that never stop}, and establishes equivalence between the trajectory-level and measure-level descriptions.

\end{abstract}

\section{Introduction}
Estimating extreme values attained along trajectories of dynamical systems is a classical problem in dynamical systems, with applications ranging from safety verification and reachability to performance identification and robustness analysis. In this work, we consider the extreme value estimation problem for autonomous polynomial dynamical systems of the form
\begin{equation*}
    \dot x(t) = f(x(t)), \qquad x(0)\in \mathcal X_0 \subset \R^n,\qquad t \in [0, \infty),
\end{equation*}
where $f$ is a polynomial vector field and $\mathcal X_0$ is a compact semialgebraic set. The extreme value of interest is defined as a supremum over admissible trajectories $x$
\[
    \sup_{x(0) \in \mathcal X_0,\; t\ge 0 } \|x(t)\|,
\]
where $\|\cdot\|$ denotes a Euclidean norm. More generally, we are interested in the supremum of a polynomial observable $p(x)$ along trajectories
\[
\sup_{x(0) \in \mathcal X_0,\; t\ge 0 }p(x(t)).
\]

The fundamental difficulty addressed in this paper is that both the time horizon and the state space are a priori unbounded. For polynomial systems, unbounded trajectories and finite-time blow-up phenomena may occur \cite{matsue_blow-up_2018, matsue_blow-up_2025, elias_critical_2006}, in which case the extreme value of the norm is infinite. Even when all trajectories remain bounded, the extreme value may be attained asymptotically as $t\to\infty$ (which is the case e.g. for the Van der Pol oscillator \cite{khalil}); this precludes a direct reduction to finite-horizon optimization. Thus, extreme value estimation on infinite time horizons poses substantial analytical and numerical challenges. In this paper, we develop a method for estimation of the maximal norm of the trajectories of an autonomous system with polynomial dynamics. Once we obtain a bound on the norm, we present a way to estimate the extreme value of a general polynomial observable $p(x)$.

A fruitful approach to estimating quantitative trajectory behaviour is based on the formulation of dynamical constraints via linear equations on measures, commonly referred to as occupation measure relaxations (see for example \cite{lasserre_nonlinear_2008, korda_convex_2014}). Its main advantage is the ability to directly analyze the behaviour of an ensemble of trajectories simultaneously and its solvability by a tractable moment-SOS hierarchy. Using Liouville's (continuity) equation \cite{lewis_relaxation_1980, vinter_1993, diperna_ordinary_1989, bernard_young_2008, buehrle_optimal_2025, unbounded_ctrl}, nonlinear dynamics can be encoded as a linear constraint on measures, allowing extreme value estimation problems to be cast as infinite-dimensional linear programs (LP) in the space of measures.

On finite time horizons and compact state spaces, this framework has already been successfully developed using the moment--SOS hierarchy in \cite{fantuzzi_bounding_2020} in the dual setting of auxiliary functions and in~\cite{miller_peak_2021, miller_peak1_2021} using the measure formulation. On infinite time horizons, extreme values on global attractors have been upper bounded in \cite{goluskin2020bounding}. \ks{However, even in the case of finite peak value attained at finite time, the existing methods usually require a priori knowledge of a bound on the state norm and time interval. Our proposed method can be used to obtain such bounds on the state norm.}

Our approach combines finding upper bounds attained at finite times (i.e., transient behavior) and asymptotically as $t \to \infty$ in one general framework. In our setting, two major obstacles arise. First, the occupation measure typically has infinite mass on infinite horizons, and second, the state space and the time horizon are noncompact, which prevents a direct application of the moment-SOS hierarchy. The main contribution of this paper is a measure-theoretic formulation of extreme value estimation that overcomes both difficulties at the price of arriving at not a single measure LP, but a family of measure LPs.

Our approach relies on three key ideas. First, we employ a Poincar\'e-type compactification \cite{matsue_blow-up_2018, elias_critical_2006, takayasu_numerical_2017} of the state space, which lifts the unbounded state variable $x\in\mathbb{R}^n$ to a compact manifold embedded in $\mathbb{R}^{n+1}$. This is, in fact, the radial compactification of $\R^n$, which enables us to distinguish between different directions towards infinity \cite{takayasu_numerical_2017}.  Second, we introduce a discounted clock variable, which compactifies the time axis and yields a Liouville's equation posed entirely on a compact domain. In order to ensure a finite mass of the occupation measure, we also discount the Liouville's equation directly by adding an additional sink term, which dissipates the mass of the occupation measure. This approach corresponds to using the so-called discounted occupation measures, which are commonly used in stochastic dynamical systems (e.g., \cite{bhatt_occupation_1996}). \ks{They were also used for infinite-horizon optimal control problems with discounted functionals in \cite{kamoutsi_2017}}. Third, instead of maximizing a terminal cost (which is discounted when we are using the discounted Liouville's equation), we formulate extreme value estimation as a sequence of reachability tests for level sets of the state norm. This leads naturally to a bisection procedure that detects whether a given level is reachable by at least one admissible trajectory.

At the infinite-dimensional level, this reachability formulation is expressed as an LP over measures constrained by the discounted Liouville's equation with compact supports. A key theoretical contribution of the paper is a superposition principle adapted to this setting. We prove that measure solutions of the infinite-horizon Liouville's equation with a time-distributed terminal measure can be represented by ensembles of stopped admissible trajectories, including trajectories with infinite stopping time. This result justifies the measure formulation and, in particular, shows that feasibility with a terminal measure of positive mass is equivalent to the existence of a trajectory reaching the prescribed level set. This equivalence provides a rigorous foundation for the proposed bisection-based extreme value recovery method on infinite horizons and is an instance of a \textit{no relaxation gap} result in measure relaxations.

On the numerical side, the resulting measure LPs are defined on compact semialgebraic sets and admit systematic and tractable approximation via the moment--SOS hierarchy. For each candidate level, an infinite-dimensional LP certifies either reachability or non-reachability. This infinite dimensional LP is then numerically solved by a hierarchy of SDPs (semidefinite programs). Once a finite bound on the state norm has been obtained, the same methodology extends naturally to the extreme value estimation of arbitrary polynomial observables instead of focusing solely on the norm.

\subsection{Outline}
The paper is organized as follows. Section~\ref{sec_problem_statement} formally introduces the problem setting. Section~\ref{sec:compactification_reparametrization} presents the state compactification and the associated polynomial dynamics after time reparameterization. Section~\ref{sec_occupation_measures} develops the occupation measure formulation on infinite horizons, including different versions of Liouville's equation used to represent the dynamics of the system. Section~\ref{sec:peak_recovery} introduces the reachability-based extreme value estimation method
and establishes its correctness. Finally, Section~\ref{sec:examples} illustrates the proposed method on numerical examples including the Van der Pol oscillator.

\subsection{Notations}
In this work, measures are assumed to be nonnegative Radon measures with an exception in the appendix, where we use signed Radon measures. A set of nonnegative Radon measures (resp. probability measures) supported on a set $\Omega$ is denoted by $\mathcal M_+(\Omega)$ (resp. $\mathcal P(\Omega)$). The Lebesgue measure in the $x$ variable is denoted $\lambda(dx)$ or $dx$ and the Dirac measure at a point $s$ is denoted by $\delta_s(dx)$ or $\delta_s$. Given a measurable map \(T : \mathcal A \to \mathcal B\) and a measure \(\mu\) on $\mathcal A$, the pushforward measure \(T_{\#}\mu\) is the measure on \(\mathcal B\) defined by
$(T_{\#}\mu)(\mathcal C) = \mu\!\left(T^{-1}(\mathcal C)\right)$
for every measurable set \(\mathcal C \subset \mathcal B\). Whenever a measure is written with differential notation, e.g. \(\mu(dt,dx)\) or $d\mu(t, x)$, we refer to the measure itself. In contrast, we use \(\mu(t)\) to denote the conditional (slice) measure at time \(t\) arising from the disintegration of \(\mu\) with respect to its time marginal. We denote the radial compactification of $\R^n$ by $\overline{\R^n}$. By $[0, \infty]$ we understand $\R_+ \cup \{+\infty\}$, the one point compactification of $\R_+ = [0, \infty)$. By $\|\bullet \|$ we denote the Euclidean norm of a vector and a dot product of two vectors $x, y \in \R^n$ is denoted by $x\cdot y$. Given a set $\mathcal A \subset \R^n$, we denote its closure by $\overline{\mathcal A}$ \ks{and its complement by $\mathcal A^c$}. By $C(\mathcal A)$ and $C^1(\mathcal A)$ we denote the spaces of continuous and continuously differentiable functions with domain $\mathcal A$, respectively; the subscript $c$ denotes the corresponding subspace of compactly supported functions. When it is necessary to specify the codomain $\mathcal B$, we write $C(\mathcal A, \mathcal B)$, $C_c(\mathcal A, \mathcal B)$, $C^{1}(\mathcal A, \mathcal B)$, $C^{1}_c(\mathcal A, \mathcal B)$. \chat{The subscript $b$ denotes the corresponding space of bounded functions.} By $\R[x]$ we understand the vector space of all functions $f(x)$, which are polynomial in $x$. By $\mathrm{deg}(f)$ we denote the degree of the polynomial function $f(x)$.

\section{Problem statement}
\label{sec_problem_statement}

Consider the dynamical system $\dot x(t) = f(x)$ on a time interval $[0, \infty)$, where $x = (x_1, \ldots, x_n)\in \R^n$, $n\in \N$ and the dynamics $f$ is a polynomial. \chat{We set $d:=\max\{1,\mathrm{deg}(f_1),\ldots,\mathrm{deg}(f_n)\}$, with the convention $\mathrm{deg}(0):=0$.} The problem we address in this paper is as follows:
\begin{align}
\label{problem_state_original}
    &\bar{p}^* = \sup_{\tau, x} \|x(\tau)\|\notag \\
    \text{s.t.: } &\dot{x}(t) = f(x(t)),\quad t \in [0, \tau], \\ &x(0) \in \mathcal X_0,\notag
\end{align}
where $\mathcal X_0 \subset \R^n$ is a compact basic semialgebraic set defined as
\begin{equation*}
    \mathcal X_0 = \Set{x \in \R^n}{g_i(x) \geq 0, \quad i = 1,\ldots, N},
\end{equation*}
where $g_i(x)$ are polynomials, $N\in\N$. \chat{We assume throughout that $\mathcal X_0\neq\varnothing$.}

Note in particular that the extreme value may be finite or infinite and the supremum may or may not be attained at a finite time. In particular, we do not rule out a finite-time escape of the trajectories. Observe that the problem \eqref{problem_state_original} is feasible, since the polynomial system $\dot x = f(x)$ is locally Lipschitz and hence \chat{for every $x_0\in\mathcal X_0$ there exists $\tau>0$ such that the ODE has a solution on the closed time interval $[0,\tau]$ with $x(0)=x_0$.}

We reformulate the problem \eqref{problem_state_original} as a linear problem on measures and then compactify, since the state variable is not a priori bounded. Instead of maximizing $\|x(\tau)\|$, we can maximize $\|x(\tau)\|^2$, which is equivalent. To avoid infinite values of the cost function, we can maximize any strictly increasing function $h(\|x\|^2)$ with bounded image. Here we take the function
\begin{align*}
    \|x\|^2 \mapsto \frac{\|x\|^2}{1 + \|x\|^2},
\end{align*}
which is indeed strictly increasing in $\|x\|^2$ and its image is bounded. Notice that any maximizer of $\frac{\|x\|^2}{1 + \|x\|^2}$ with objective value $m^*$ is indeed a maximizer of $\|x\|^2$ and we can recover $\|x\|^2$ from $m^*$ as $\|x\|^2 = \frac{m^*}{1-m^*}$ if $m^* < 1$. Hence, from now on, we focus on the following problem equivalent to \eqref{problem_state_original}
\begin{align}
\label{problem_state_bounded_cost}
     &p^* = \sup_{\tau, x}\frac{\|x(\tau)\|^2}{1 + \|x(\tau)\|^2}\notag \\
    \text{s.t.: } &\dot{x}(t) = f(x(t)), \quad t \in [0, \tau],\\ &x(0) \in \mathcal X_0,\notag
\end{align}
which has the same maximizer (if the maximum is attained), although a different objective value. However, the objective value of the problem \eqref{problem_state_original} can be recovered from the objective value of the problem \eqref{problem_state_bounded_cost} as $\bar p^* = \sqrt{\frac{p^*}{1-p^*}}$ if $p^* < 1$ and $\bar p^* = \infty$ if $p^* = 1$ in view of the discussion above.

\subsection{Illustrative examples}

As mentioned in the Introduction, the proposed approach does not require any prior knowledge of how an extreme event occurs. In particular, it covers three qualitatively different scenarios.

First, a finite extreme value may be attained at a finite time. This setting was previously studied in \cite{miller_peak1_2021} and is also covered by our formulation. However, our method does not require any a priori known finite time horizon for the occurrence of the extreme value. The following polynomial example illustrates this behaviour.

\begin{example}
Consider the system
\[
\begin{aligned}
\dot{x}_1
&=
\frac{x_2}{2}
-2\kappa x_1
\left(
x_1^2+\frac{x_2^2}{4}-1
\right),\\
\dot{x}_2
&=
-2x_1
-\frac{\kappa x_2}{2}
\left(
x_1^2+\frac{x_2^2}{4}-1
\right).
\end{aligned}
\]
The ellipse
\(
x_1^2+x_2^2/4=1
\)
is a stable limit cycle. For example, if \(\kappa=0.1\) and \(x(0)=(-1.4,0)\), the trajectory approaches this limit cycle from its exterior, but its Euclidean norm attains the transient peak
\[
\left\|x(t_{\mathrm{peak}})\right\|
\approx 2.453
\qquad\text{at}\qquad
t_{\mathrm{peak}}\approx 1.492.
\]
This exceeds both the initial norm and the maximal norm \(2\) on the limit cycle. The behaviour is illustrated in Figure~\ref{fig:transient-peak}.
\end{example}

Second, a trajectory may exhibit finite-time blow-up, in which case the corresponding extreme value is unbounded. The following elementary example illustrates this behaviour.

\begin{example}
Consider the scalar system
\[
\dot{x}(t)=x(t)^2,
\qquad
x(0)=1.
\]
Its solution is
\[
x(t)=\frac{1}{1-t},
\qquad
t\in[0,1),
\]
and therefore becomes unbounded as \(t\to 1^{-}\). Thus, the solution experiences finite-time blow-up at \(t=1\). The trajectories for different possible initial conditions are shown in Figure \ref{fig:blowup-trajectory}.
\end{example}

Finally, a finite extreme value need not be attained at any finite time; it may instead be approached asymptotically as \(t\to\infty\). This behaviour is illustrated by the Van der Pol oscillator \cite{khalil}.

\begin{example}
Consider the Van der Pol oscillator,
\begin{align*}
\dot{x}_1(t)
&=x_2(t), \\
\dot{x}_2(t)
&=-x_1(t)
+\bigl(1-x_1(t)^2\bigr)x_2(t).
\end{align*}
The system possesses a stable limit cycle. For example, the trajectory starting from
\(
x(0)=\left(\frac{1}{2},\frac{1}{2}\right)
\)
approaches this limit cycle from its interior. In this case, the supremum of the Euclidean norm of the trajectory is given by the maximal norm attained on the limit cycle. This value is approached asymptotically as \(t\to\infty\), but it is not attained at any finite time. This scenario is shown in Figure \ref{fig:vanderpol-trajectory}.
\end{example}

\begin{figure}[h]
\centering

\begin{subfigure}[c]{0.33\textwidth}
    \centering
    \includegraphics[width=\textwidth]{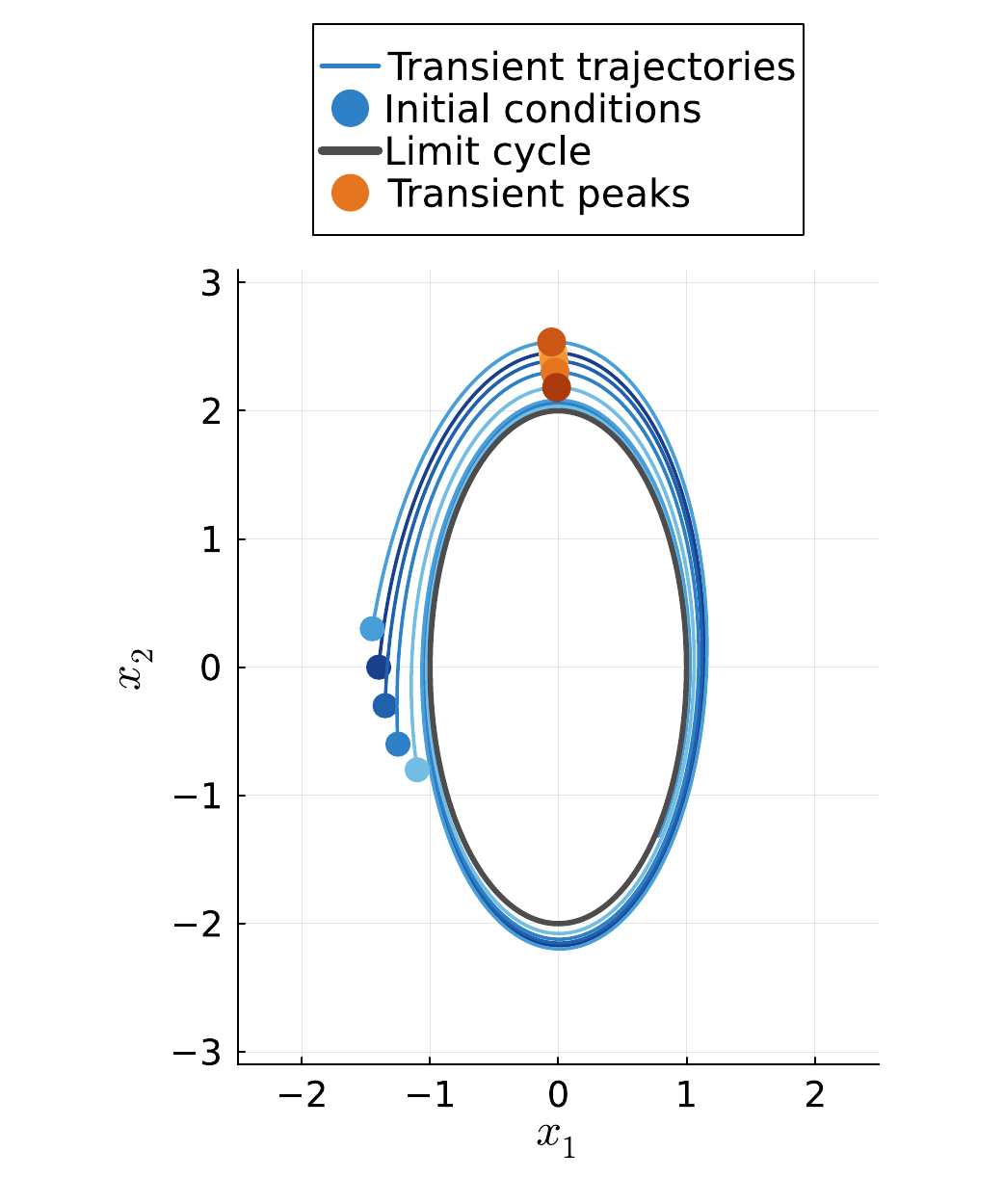}
    \caption{Trajectories of the system attaining finite peaks during the transient period}
    \label{fig:transient-peak}
\end{subfigure}\hfil
\begin{subfigure}[c]{0.32\textwidth}
    \centering
    \includegraphics[width=\textwidth]{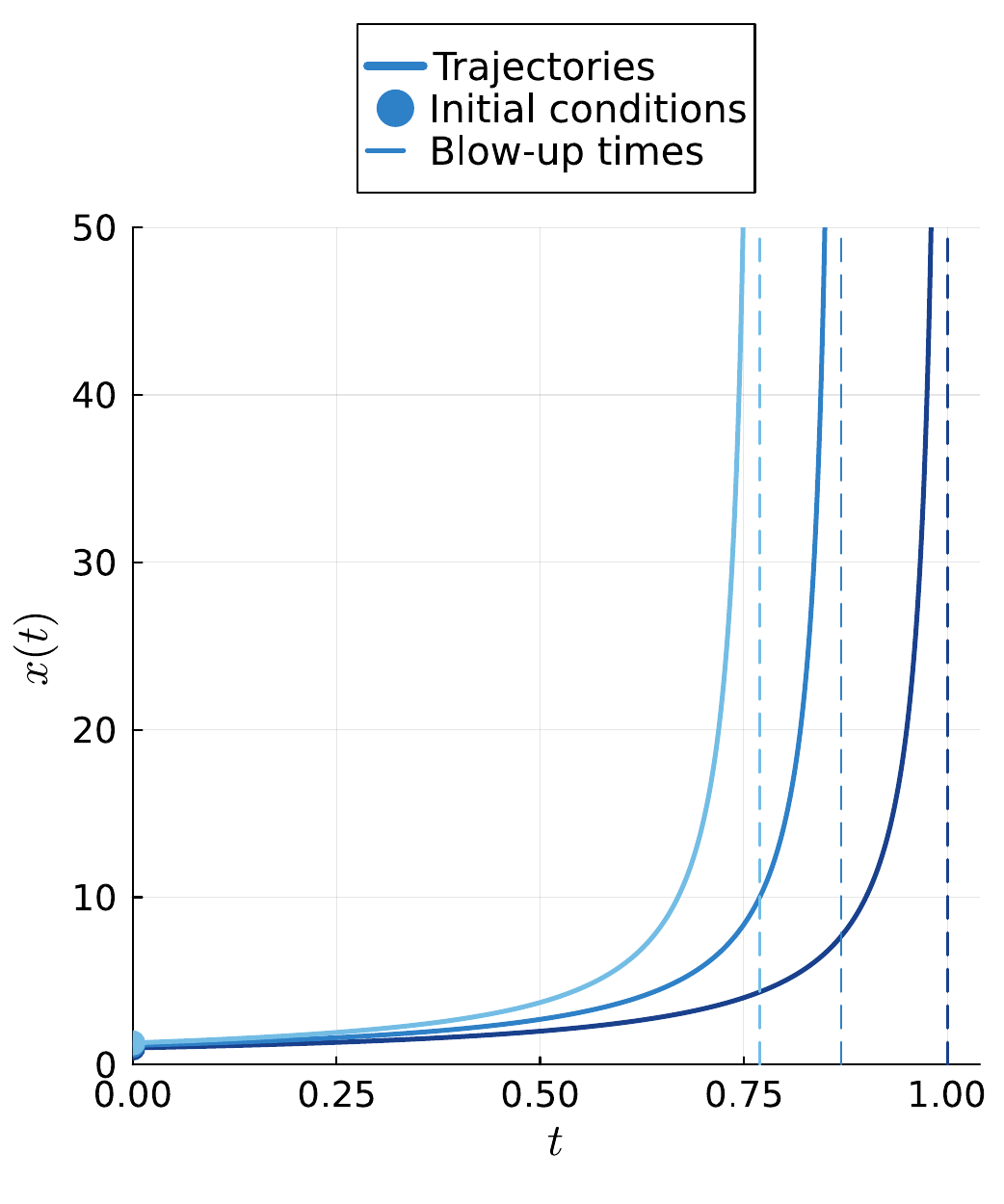}
    \caption{Trajectories of the scalar system exhibiting finite-time blow-up at different times}
    \label{fig:blowup-trajectory}
\end{subfigure}\hfil
\begin{subfigure}[c]{0.33\textwidth}
    \centering
    \includegraphics[width=\textwidth]{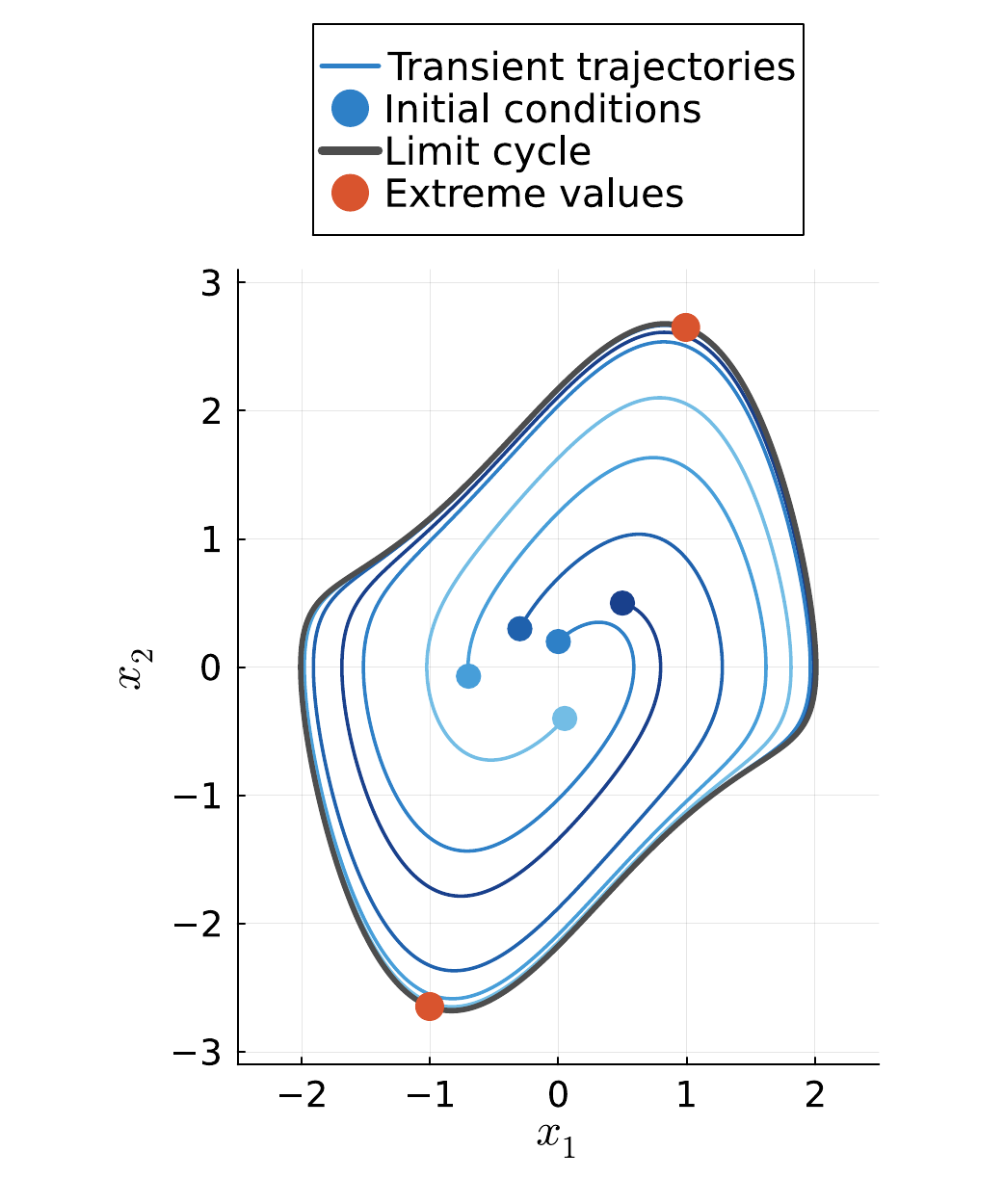}
    \caption{Trajectories of the Van der Pol oscillator approaching its stable limit cycle}
    \label{fig:vanderpol-trajectory}
\end{subfigure}

\caption{Three qualitatively different types of extreme behaviour: finite peak on the transient trajectory; finite-time blow-up; asymptotic approach to an extreme value on a stable limit cycle}
\label{fig:illustrative-trajectories}

\end{figure}

The blow-up and asymptotic scenarios are revisited in
Section~\ref{sec:examples}, where we apply the proposed method to compute
\chat{numerical estimates of the corresponding extreme values.}

\section{Compactification}\label{sec:compactification_reparametrization}
In this section, we first compactify the state space, which results in a non-polynomial dynamics of the system. To regain the polynomial property, we then use an appropriate time reparametrization.
\subsection{State compactification}
\label{sec_compactification}
To compactify the state space, we employ a compactification technique, as described in e.g. \cite{elias_critical_2006, matsue_blow-up_2018, takayasu_numerical_2017} for an autonomous system of polynomial equations. In the context of polynomial optimization, this corresponds to homogenization (or partial homogenization), which is described in \cite{huang_homogenization_2022, nie_moment_2023, unbounded_ctrl}. This compactification corresponds to lifting the phase plane $\R^n$ to a hemisphere in $\R^{n+1}$. This is sometimes called Poincaré compactification.  We define the following change of coordinates
\begin{equation}
\label{state_coordinate_change}
    x_i = \frac{z_i}{w} \quad \text{for all } i = 1,\ldots, n,
\end{equation}
where
\begin{equation}
\label{state_coordinate_constraints}
    (z_1,\ldots,z_n, w) = (z, w) \in \mathcal B \defeq \Set{(z, w) \in \R^{n+1}}{\sum_{i=1}^nz_i^2 + w^2 = 1, \quad w \geq 0}.
\end{equation}
The set $\mathcal B$ is called the Poincaré hemisphere.

\begin{proposition}
\label{prop_bijection}
The mapping
\begin{align}
\label{mapping_g}
    h: \overline{\mathbb{R}^n} \to \mathcal B,
\end{align}
defined for \(x \in \mathbb{R}^n\) by 
\begin{align*}
    x &\mapsto (z, w) = \left( \frac{x_1}{\sqrt{1+\sum_{i=1}^nx_i^2}},\ldots, \frac{x_n}{\sqrt{1+\sum_{i=1}^nx_i^2}}, \frac{1}{\sqrt{1+\sum_{i=1}^nx_i^2}} \right)
\end{align*}
and defined by
\[
    h(\infty \cdot a) = (a,0), \quad \|a\| = 1,
\]
is a homeomorphism.
\end{proposition}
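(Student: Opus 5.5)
The plan is to check that $h$ is a bijection, then show it is continuous, and finally deduce that it is a homeomorphism by the compact-to-Hausdorff argument. The radial compactification $\overline{\R^n}$ is understood as $\R^n \cup \{\infty\cdot a : \|a\|=1\}$. Its topology is the one in which a sequence $x_k$ with $\|x_k\|\to\infty$ and $x_k/\|x_k\|\to a$ converges to $\infty\cdot a$. Neighbourhoods of $\infty\cdot a$ are the truncated cones $\{x : \|x\|>R,\ \|x/\|x\|-a\|<\varepsilon\}$ together with the boundary points $\infty\cdot b$ with $\|b-a\|<\varepsilon$.

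\emph{Bijectivity.} First, $h$ maps into $\mathcal B$. For $x\in\R^n$ we have $\|z\|^2+w^2=(\|x\|^2+1)/(1+\|x\|^2)=1$ and $w>0$. For a boundary point, $\|a\|^2+0=1$. Next, the finite part $\R^n$ maps bijectively onto the open hemisphere $\{w>0\}$, with explicit inverse $(z,w)\mapsto z/w$. This is just the coordinate change \eqref{state_coordinate_change}: indeed $z/w=x$, and conversely for $w>0$ one has $1+\|z/w\|^2=1/w^2$, so $h(z/w)=(z,w)$. Finally, the points at infinity map bijectively onto the equator $\{w=0\}=\{(a,0):\|a\|=1\}$. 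So $h$ is a bijection onto $\mathcal B$.

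\emph{Continuity.} On $\R^n$, $h$ is a composition of smooth functions, hence continuous. At a boundary point $\infty\cdot a$, take $x_k\to\infty\cdot a$ with $x_k\in\R^n$. Then
\[
h(x_k)=\Bigl(\tfrac{\|x_k\|}{\sqrt{1+\|x_k\|^2}}\cdot\tfrac{x_k}{\|x_k\|},\ \tfrac{1}{\sqrt{1+\|x_k\|^2}}\Bigr)\to(a,0).
\]
For boundary points $\infty\cdot b_k\to\infty\cdot a$, i.e.\ $b_k\to a$, we have $h(\infty\cdot b_k)=(b_k,0)\to(a,0)$ trivially. Mixed sequences split into these two subsequences, so $h$ is sequentially continuous. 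This suffices because both spaces are first countable; alternatively, one can verify directly that the preimage of a small ball around $(a,0)$ contains a cone neighbourhood.

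\emph{Conclusion.} $\overline{\R^n}$ is compact and $\mathcal B$ is Hausdorff, so a continuous bijection between them is a homeomorphism. Alternatively, one can avoid invoking compactness of $\overline{\R^n}$ and instead prove continuity of $h^{-1}$ directly. The formula $z/w$ is continuous on $\{w>0\}$. At $(a,0)$, if $(z_k,w_k)\to(a,0)$ with $w_k>0$, then $\|z_k/w_k\|=\sqrt{1-w_k^2}/w_k\to\infty$ and the direction $z_k/\|z_k\|\to a$.

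\emph{Main obstacle.} The only delicate step is fixing the topology of $\overline{\R^n}$ precisely and handling convergence to the points at infinity. I would settle this by taking the cone-neighbourhood description above as the definition and proving continuity of $h^{-1}$ directly, as just described. That way the result does not rely on an unproven compactness claim for $\overline{\R^n}$; in fact compactness then follows from the proposition.
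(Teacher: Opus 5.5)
Your proof is correct and follows the same route as the paper. Both establish bijectivity via the explicit inverse $(z,w)\mapsto z/w$ on $\{w>0\}$, with $(a,0)\mapsto \infty\cdot a$ on the equator, and then argue continuity of $h$ and $h^{-1}$. The paper only asserts the two continuity claims, whereas you fix the cone-neighbourhood topology on $\overline{\R^n}$ and check convergence at the points at infinity, which supplies exactly the step the paper leaves implicit.
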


\begin{proof}
The mapping $h$ clearly satisfies (\ref{state_coordinate_change}) - (\ref{state_coordinate_constraints}) and for every $z, w$, $w > 0$ we have a pre-image $x$ defined by (\ref{state_coordinate_change}). The pre-image of $\left(z, 0 \right)$ is $z\cdot\infty$ ($z$ describes the direction in which the original vector $x$ reaches infinity) according to the definition. Therefore, $h$ is a surjection. Whenever $h(x) = h(y)$, that means
\begin{align*}
    &\left( \frac{x_1}{\sqrt{1+\sum_{i=1}^nx_i^2}},\ldots, \frac{x_n}{\sqrt{1+\sum_{i=1}^nx_i^2}}, \frac{1}{\sqrt{1+\sum_{i=1}^nx_i^2}} \right)\\ =& \left( \frac{y_1}{\sqrt{1+\sum_{i=1}^ny_i^2}},\ldots, \frac{y_n}{\sqrt{1+\sum_{i=1}^ny_i^2}}, \frac{1}{\sqrt{1+\sum_{i=1}^ny_i^2}} \right),
\end{align*}
we have $x = y$. Indeed, the equality of the last component implies that $\sum_{i=1}^nx_i^2 = \sum_{i=1}^ny_i^2$, then from the equality of the $i^{\text{th}}$ component, it must hold that $x_i = y_i$ for every $i = 1,\ldots, n$. This concludes that $h$ is an injection. Since $h$ is continuous and has an inverse, $h^{-1}: (z, w)\mapsto \frac{z}{w}$, with $(z, 0) \mapsto z\cdot \infty$, which is also continuous, we conclude that $h$ is a homeomorphism.
\end{proof}

However, since the unbounded variable is the state, which has its dynamics, we must also modify the dynamics accordingly. That is, we have to find an expression for the dynamics of the new variables $(z, w)$. Using the chain rule, we have
\begin{align}
\label{w_der}
    \dot{w}(t) &= \frac{d}{dt}\left( \left(1+\sum_{i=1}^nx_i^2\right)^{-\frac{1}{2}} \right) = -w^{2}\sum_{i=1}^nz_if_i\left(\frac{z}{w}\right),\\
    \label{z_der}
    \dot{z_i}(t) &= \frac{d}{dt}\left(wx_i\right) =wf_i\left(\frac{z}{w}\right) - z_iw\sum_{j = 1}^n z_jf_j\left(\frac{z}{w}\right)
\end{align}

We label the new dynamics $(\dot z_1, \ldots, \dot z_n, \dot w) \defeq f_{\text{b}}(z, w)$. Notice that while the original dynamics $f$ was polynomial, the dynamics in homogeneous coordinates $f_{\text{b}}$ has rational terms.

Additionally, we also need the description of the set of all possible initial conditions $\mathcal X_0$ in homogenized variables $(z, w)$. Clearly, we can take the set 
\begin{equation*}
\mathcal Z_0 \defeq \ks{h(\mathcal X_0)},
\end{equation*}
where $h$ is the homeomorphism of Proposition \ref{prop_bijection}, but its description is not polynomial.
\begin{definition}[Homogenization of a polynomial]
    Given a polynomial $p(x)$ of a degree $d$, we denote its homogenization $\tilde p(z, w) \defeq w^dp\left(\frac{z}{w}\right)$.
\end{definition}
Instead of the set $\mathcal Z_0$, we take a basic semialgebraic set defined as 
\begin{equation}
\label{eq:initial_set_compact}
    \mathcal Y_0 \defeq \Set{(z, w) \in \mathcal B}{\tilde g_i\left(z, w\right) \geq 0, \quad i = 1,\ldots, N}.
\end{equation}
 We can assume the following assumption without loss of generality, as the initial set $\mathcal X_0$ is assumed to be compact.

\begin{assumption}\label{ass:ball_constraint}
    We assume that the ball constraint $K - \|x\|^2 \geq 0$ is in the definition of the set $\mathcal X_0$.
\end{assumption}
The relation between the sets $\mathcal Z_0$ and $\mathcal Y_0$ is explained by the following proposition.

\begin{proposition}\label{prop_initialsets_equivalence}
    Under Assumption \ref{ass:ball_constraint} it holds that \((\bar z, \bar w) \in \mathcal Z_0\) if and only if \((\bar z, \bar w) \in \mathcal Y_0.\)
\end{proposition}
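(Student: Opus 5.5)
We prove the two inclusions separately, using Proposition~\ref{prop_bijection}: every point of $\mathcal B$ with $w>0$ is $h(x)$ for the unique $x=z/w\in\R^n$, and the points with $w=0$ are the images of the points at infinity. The key identity is that for $w>0$ and $x=z/w$,
\[
\tilde g_i(z,w)=w^{d_i}g_i\!\left(\tfrac{z}{w}\right)=w^{d_i}g_i(x),\qquad d_i=\mathrm{deg}(g_i),
\]
and $w^{d_i}>0$. Hence $\tilde g_i(z,w)\ge 0$ if and only if $g_i(x)\ge0$. So on the open part $\{w>0\}$ of $\mathcal B$ the two descriptions coincide.

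For the inclusion $\mathcal Z_0\subset\mathcal Y_0$, take $(\bar z,\bar w)=h(x)$ with $x\in\mathcal X_0$. Since $x$ is finite, $\bar w=(1+\|x\|^2)^{-1/2}>0$ and $x=\bar z/\bar w$. The identity above then gives $\tilde g_i(\bar z,\bar w)\ge0$ for all $i$. Because $h$ maps into $\mathcal B$, we get $(\bar z,\bar w)\in\mathcal Y_0$.

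For the converse, take $(\bar z,\bar w)\in\mathcal Y_0$ and split into two cases.

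\emph{Case $\bar w>0$.} Set $x=\bar z/\bar w$. Dividing each inequality $\tilde g_i(\bar z,\bar w)\ge0$ by $\bar w^{d_i}$ gives $g_i(x)\ge0$, so $x\in\mathcal X_0$. Moreover $h(x)=(\bar z,\bar w)$, since $\|\bar z\|^2+\bar w^2=1$ forces $\bar w=(1+\|x\|^2)^{-1/2}$.

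\emph{Case $\bar w=0$.} This is the main obstacle: we must rule out points at infinity, and this is exactly where Assumption~\ref{ass:ball_constraint} is used. The constraint $g(x)=K-\|x\|^2$ has degree $2$, so its homogenization is
\[
\tilde g(z,w)=Kw^2-\|z\|^2.
\]
At $\bar w=0$ we have $\|\bar z\|=1$ on $\mathcal B$, so $\tilde g(\bar z,0)=-1<0$. This contradicts $(\bar z,0)\in\mathcal Y_0$, so the case cannot occur.

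One technicality needs care: if $K=0$ the ball constraint is degenerate. In that case I would note that $\mathcal X_0=\{0\}$, or simply take $K>0$ without loss of generality. Either way, since $\|\bar z\|=1$ at $\bar w=0$, we still get $\tilde g(\bar z,0)=-\|\bar z\|^2=-1<0$, so the argument goes through. Combining the two cases gives $\mathcal Y_0\subset\mathcal Z_0$, which completes the equivalence.
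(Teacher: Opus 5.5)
Your proof is correct and follows the paper's argument. On $\{w>0\}$ you multiply or divide by $\bar w^{d_i}>0$, and you use the homogenized ball constraint $Kw^2-\|z\|^2\ge 0$ to rule out $\bar w=0$. You get $-1<0$ directly from $\|\bar z\|=1$, whereas the paper derives $\bar z=0$ and then contradicts $\|\bar z\|^2+\bar w^2=1$; this is the same step, and your explicit check that $h(\bar z/\bar w)=(\bar z,\bar w)$ fills in a detail the paper leaves implicit.
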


\begin{proof}
    The mapping $x \mapsto (z, w)$, with $x = \frac{z}{w}$ and $(z, w) \in \mathcal B$, is a homeomorphism between $\mathcal X_0$ and $\mathcal Z_0$ in the homogenized coordinates. Since $\mathcal X_0$ is contained in the ball $\{x \in \mathbb{R}^n \mid \|x\|^2 \leq K\}$, we can derive a lower bound on $w$. For any $x \in \mathcal X_0$, the homogenization gives
    \[
        w = \frac{1}{\sqrt{1 + \|x\|^2}} \geq \frac{1}{\sqrt{1 + K}} > 0.
    \]

    $(\Rightarrow)$ Suppose $(\bar z, \bar w) \in \mathcal Z_0$, i.e., $(\bar z, \bar w) \in \mathcal B$ and $g_i\left(\frac{\bar z}{\bar w}\right) \geq 0$ for all $i = 1, \ldots, N$. Since $\bar w > 0$, we can multiply each inequality by $\bar w^{d_i}$, where $d_i$ is the degree of $g_i$, preserving the inequality
    \[
        \bar w^{d_i} g_i\left(\frac{\bar z}{\bar w}\right) = \tilde g_i(\bar z, \bar w) \geq 0, \quad \forall i.
    \]
    Hence, $(\bar z, \bar w) \in \mathcal Y_0$.

    $(\Leftarrow)$ Suppose $(\bar z, \bar w) \in \mathcal Y_0$, i.e., $(\bar z, \bar w) \in \mathcal B$ and $\tilde g_i(\bar z, \bar w) \geq 0$ for all $i$. Then
    \[
        \bar w^{d_i} g_i\left(\frac{\bar z}{\bar w}\right) \geq 0, \quad \forall i.
    \]

    If $\bar w > 0$, dividing each inequality by $\bar w^{d_i}$ yields $g_i\left(\frac{\bar z}{\bar w}\right) \geq 0$, so $(\bar z, \bar w) \in \mathcal Z_0$.

    Now consider $\bar w = 0$. In this case, the homogenized ball constraint (derived from the original $K - \|x\|^2 \geq 0$) becomes
    \[
        K \bar w^2 - \|\bar z\|^2 \geq 0 \quad \Rightarrow \quad -\|\bar z\|^2 \geq 0 \quad \Rightarrow \quad \bar z = 0.
    \]
    But this is in contradiction with the constraint $\|\bar z\|^2 + \bar w^2 = 1$. Therefore, $\bar w \neq 0$ for any $(\bar z, \bar w) \in \mathcal Y_0$, and the implication follows.
\end{proof}

\begin{remark}
\label{remark_ball_constraint}
    When $\mathcal Z_0$ (or equivalently $\mathcal X_0$) is compact, we can add the ball constraint without loss of generality to enforce the correspondence of $\mathcal Z_0$ and $\mathcal Y_0$. In general, if $\mathcal Z_0$ is allowed to be noncompact, the ball constraint obviously cannot be enforced and in order to obtain correspondence of these two sets, one needs a property called closedness at infinity; see for example \cite{huang_homogenization_2022}.
\end{remark}

The following example illustrates a situation where the correspondence of sets $\mathcal Z_0$ and $\mathcal Y_0$ fails without the ball constraint.

\begin{example}
    The set 
    \[
    \mathcal X_0 = \Set{x \in \R^2}{x_2^2 - x_1 + 1 \geq 0, x_1\geq 0, x_2 \geq 0, 1-x_2 \geq 0}
    \]
    is compact and does not contain the ball constraint. When we investigate its homogenization
    \[
    \mathcal Y_0 = \Set{(z, w) \in \mathcal B}{z_2^2 - z_1w + w^2 \geq 0,z_1 \geq 0, z_2 \geq 0, w - z_2 \geq 0},
    \]
    we come to the conclusion that $(1, 0, 0) \in \mathcal Y_0$, but it corresponds to $(\infty, 0) \notin \mathcal X_0$ in the original coordinates. Hence, $\mathcal X_0$ (and thus $\mathcal Z_0$) does not correspond to $\mathcal Y_0$.
\end{example}

\begin{remark}
    For the one-dimensional case (i.e, $n = 1$), it can be shown that the correspondence of $\mathcal Y_0$ and $\mathcal Z_0$ holds even without the ball constraint.
\end{remark}

Due to the reasoning above, we can take the set $\mathcal Y_0$ as the representation of the set $\mathcal X_0$ in the new variables $(z, w)$ for the dynamical system (\ref{w_der}) - (\ref{z_der}) as long as Assumption \ref{ass:ball_constraint} is satisfied. Notice that the dynamical system (\ref{w_der}) - (\ref{z_der}) \ks{in the interior of $\mathcal B$} is topologically equivalent\footnote{two systems are topologically equivalent, provided that there is a homeomorphism mapping trajectories of the first system to trajectories of the second system with the same direction of time} (see \cite{kuznetsov_2023}) to the original dynamical system $\dot{x}(t) = f(x)$ (according to Proposition \ref{prop_bijection}, the mapping $x \mapsto (z, w)$ is a homeomorphism). 

\subsection{Time reparameterization}
By construction, the compactified system \eqref{w_der}-\eqref{z_der} is not polynomial. To recover a polynomial property of the vector field $f_{\mathrm b}$, we use the following time reparameterization, as investigated in \cite{matsue_blow-up_2018} \[d\tau = \frac{1}{w^{d-1}}dt.\]
Observe that $t = 0$ corresponds to $\tau = 0$ and a time interval $[0, t_1)$ corresponds to a time interval $[0, \int_0^{t_1}\frac{1}{w^{d-1}}dt)$ in the new time variable $\tau$. After the time reparameterization, we arrive at the following dynamical system
\begin{align}
    \label{z_der_poly}
    \dot{z_i}(\tau) & =w^{d}f_i\left(\frac{z}{w}\right) - z_iw^{d}\sum_{j = 1}^n z_jf_j\left(\frac{z}{w}\right)\\
    \label{w_der_poly}
    \dot{w}(\tau) &= -w^{d+1}\sum_{i=1}^nz_if_i\left(\frac{z}{w}\right).
\end{align}
We can see that the system \eqref{z_der_poly}-\eqref{w_der_poly} is polynomial and it is equivalent to the system \eqref{w_der}-\eqref{z_der}, since the trajectories of these two systems are the same, with the only difference being the speed of the solution along the trajectories. We label the system $(\dot z_1, \ldots, \dot z_n, \dot w) =: f_{\text{p}}$.

\begin{proposition}
    If the solution of the original system in problem \eqref{problem_state_original} blows up at time $t_0 > 0$, the corresponding blow-up time in system \eqref{z_der_poly}-\eqref{w_der_poly} is $\tau_0 = \infty$.
\end{proposition}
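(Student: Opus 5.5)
The plan is to write the new time as an explicit integral and show that this integral diverges as $t\to t_0^-$. Along the solution $x(t)$ on $[0,t_0)$ we have $w(t)=(1+\|x(t)\|^2)^{-1/2}$, and the reparameterization gives
\[
\tau(t)=\int_0^{t}\frac{ds}{w(s)^{d-1}}=\int_0^t \bigl(1+\|x(s)\|^2\bigr)^{\frac{d-1}{2}}\,ds .
\]
This is strictly increasing in $t$. The blow-up time in the new clock is $\tau_0=\lim_{t\to t_0^-}\tau(t)$, so it suffices to show $\int_0^{t_0}(1+\|x(s)\|^2)^{(d-1)/2}ds=\infty$. Since $\tau(t)$ is finite for every $t<t_0$, the trajectory of \eqref{z_der_poly}--\eqref{w_der_poly} is defined (and stays in the interior of $\mathcal B$) on $[0,\tau_0)$, so $\tau_0$ is indeed its escape time toward $w=0$.

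First I will dispose of the case $d=1$. Then $f$ is affine, solutions exist globally, and blow-up at a finite $t_0$ cannot happen. So assume $d\ge 2$.

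The main step is a lower bound on the rate at which $r(t):=1+\|x(t)\|^2$ can grow. Since $|f_i(x)|\le C(1+\|x\|^2)^{d/2}$ for some constant $C$, we get
\[
\dot r = 2x\cdot f(x)\le 2\|x\|\,\|f(x)\|\le C' r^{\frac{d+1}{2}} .
\]
Equivalently, $\tfrac{d}{dt}\log r\le C' r^{(d-1)/2}=C'/w^{d-1}$. Integrating from $0$ to $t$ gives
\[
\log r(t)-\log r(0)\le C'\,\tau(t).
\]
Blow-up at $t_0$ means $\|x(t)\|\to\infty$ as $t\to t_0^-$; this holds because a bounded solution would extend beyond $t_0$, and $\limsup$ suffices here. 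Hence $\log r(t)$ is unbounded as $t\to t_0^-$. By the inequality above, $\tau(t)$ is unbounded too, and since $\tau$ is monotone, $\tau_0=\infty$.

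The step I expect to need the most care is the polynomial growth estimate $\|f(x)\|\le C(1+\|x\|^2)^{d/2}$ together with the observation that the exponent in $\dot r\le C' r^{(d+1)/2}$ exceeds the exponent $(d-1)/2$ of the time change by exactly one. This exact match is what makes $\log r$ the right quantity to compare with $\tau$. Both facts are elementary: the first follows from $|x^\alpha|\le(1+\|x\|^2)^{|\alpha|/2}$ for monomials with $|\alpha|\le d$. As an alternative viewpoint one can note that $w\to0$ in the new system, and that $w=0$ is invariant because the right-hand side of \eqref{w_der_poly} carries the factor $w^{d+1}$ (the bracket $w^{d}\sum z_i f_i(z/w)$ being polynomial). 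So $w$ cannot reach $0$ in finite $\tau$ by uniqueness for the locally Lipschitz polynomial field $f_{\mathrm p}$. This gives a second, cleaner proof that I would present if the integral estimate becomes messy.
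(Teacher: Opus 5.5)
Your proof is correct, and it is essentially the paper's argument carried out in the original coordinates. Since $r=1+\|x\|^2=w^{-2}$ and $\frac{d}{d\tau}\log r=2Q(z,w)$ with $Q(z,w)=w^{d}\sum_i z_if_i(z/w)=w^{d+1}\,x\cdot f(x)$, your inequality $\log r(t)-\log r(0)\le C'\tau(t)$ is exactly the paper's bound $w(\tau)\ge w(0)e^{-M\tau}$, with $C'$ playing the role of $2M$; the only difference is that the paper gets the constant from boundedness of the polynomial $Q$ on the compact hemisphere $\mathcal B$, whereas you get it from the polynomial growth estimate on $f$. One small slip in your alternative remark: as a polynomial identity \eqref{w_der_poly} reads $\dot w=-w\,Q(z,w)$, so the factor giving invariance of $\{w=0\}$ is $w$, not $w^{d+1}$. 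This is also the form the paper uses to write $w(\tau)=w(0)\exp\bigl(-\int_0^\tau Q(z(u),w(u))\,du\bigr)$.
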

\begin{proof}
\chat{Let
\[
\widetilde f_i(z,w):=w^d f_i(z/w),
\qquad
Q(z,w):=\sum_{i=1}^n z_i\widetilde f_i(z,w).
\]
Then $\widetilde f=(\widetilde f_1,\ldots,\widetilde f_n)$ is polynomial and
the reparameterized dynamics can be written as
\[
\dot z=\widetilde f(z,w)-zQ(z,w),
\qquad
\dot w=-wQ(z,w).
\]
Set
\[
H(z,w):=\|z\|^2+w^2-1.
\]
Along every local solution,
\[
\frac{d}{d\tau}H(z(\tau),w(\tau))
=-2Q(z(\tau),w(\tau))H(z(\tau),w(\tau)).
\]
Therefore $H(z(0),w(0))=0$ implies $H(z(\tau),w(\tau))=0$ throughout
the interval of existence. Moreover,
\[
w(\tau)
=w(0)\exp\left(-\int_0^\tau Q(z(u),w(u))\,du\right).
\]
Thus $w(0)\geq0$ implies $w(\tau)\geq0$, and $w(0)>0$ implies
$w(\tau)>0$ at every finite time. This proves that the hemisphere
$\mathcal B$ is invariant.

Since $Q$ is continuous and $\mathcal B$ is compact, there exists $M>0$
such that $|Q(z,w)|\leq M$ on $\mathcal B$. Hence
\[
w(\tau)\geq w(0)e^{-M\tau}>0
\qquad\text{for every finite }\tau
\]
whenever $w(0)>0$.

Now suppose that the solution of the original system blows up at time
$t_0>0$, so that $\|x(t)\|\to\infty$ as $t\uparrow t_0$. By the
compactification relation $x=z/w$, this is equivalent to $w\to0$ along
the corresponding compactified trajectory. Since $w$ cannot reach zero
at a finite value of $\tau$, this can occur only as $\tau\to\infty$.
Therefore the corresponding blow-up time for
\eqref{z_der_poly}--\eqref{w_der_poly} is $\tau_0=\infty$.}
\end{proof}

\chat{Because $f_{\mathrm p}$ is polynomial, it is locally Lipschitz on
$\mathbb R^{n+1}$. Every solution starting in $\mathcal B$ remains in the
compact invariant set $\mathcal B$, and hence extends uniquely to all
$\tau\in[0,\infty)$.}


For the rest of the work, we will use the following notation $y = (z, w)$ and  the system in the homogeneous coordinates after the time reparametrization will be denoted as
\begin{equation}\label{eq:reparametrized_compact_system}
    \dot y = f_{\text p}(y),\qquad y(0) \in \mathcal Y_0.
\end{equation}

\section{Occupation measure formulation}
\label{sec_occupation_measures}

 In this section, we consider a general dynamical system 
\begin{equation}\label{general_system}
    \dot x(t) = f(x(t)), \quad x(0)\in \mathcal X_0,
\end{equation}
where $f:\mathcal X \to\R^n$. The sets $\mathcal X$, $\mathcal X_0$ satisfy $\mathcal X_0 \subset \mathcal X \subset \R^n$ with $\mathcal X_0$ being compact and $\X$ closed. The set $\X$ is not required to be compact; in particular, $\X$ could be equal to all of $\mathbb{R}^n$. The set $\X$ will play the role of a constraint set if it is not invariant under the dynamics or that of the state-space itself if it is invariant. Either way, we suppose that $f$ is  globally Lipschitz on $\mathcal X$. This implies that any solution to~\eqref{general_system} that stays in $\X$ is continuously differentiable. 

We will use the following definition of admissible trajectory and occupation measure. 


\begin{definition}[Admissible trajectory]
Let $\bar t\in[0,\infty]$ and define
\[
I_{\bar t}:=
\begin{cases}
[0,\bar t], & \bar t<\infty,\\
[0,\infty), & \bar t=\infty.
\end{cases}
\]
We say that a continuously differentiable curve
\[
x:I_{\bar t}\to\mathbb R^n
\]
is an admissible trajectory of the system~\eqref{general_system}
with stopping time $\bar t$ if
\[
x(0)\in\mathcal X_0,
\qquad
x(t)\in\mathcal X,\qquad \dot{x}(t) = f(x(t))
\quad\text{for all }t\in I_{\bar t}.
\]
\end{definition}

\begin{definition}[Occupation, terminal and initial measures]
    Let $\bar t \in [0, \infty]$. We define the occupation measure $d\mu\left(t,x\right)$ associated with an admissible trajectory $x(\cdot) $ as the pushforward of the Lebesgue measure on $[0,\bar t)$ by the mapping $t\mapsto (t, x(t))$. \ks{That is, given a measurable, bounded and compactly supported function $\phi: [0, \infty)\times \X \to \R$, it is defined by relation
    \begin{equation*}
    \int_{[0, \infty)\times \mathcal X}\phi(t, x)d\mu(t,x) = \int_0^{\bar t}\phi(t, x(t))dt,
\end{equation*}
where on the left side, the variables $x, t$ are integrated variables, whereas on the right side, $t$ is the independent variable and $x(\cdot)$ is function depending on $t$.
    }
 Moreover, we define the terminal measure $\mu_{\mathrm T}$ corresponding to an admissible trajectory $x(t)$ with stopping time $\bar t$ as
 \ks{
 \begin{equation*}
\mu_{\mathrm T}
\defeq
\begin{cases}
\delta_{(\bar t,x(\bar t))}, & \bar t<\infty,\\
0, & \bar t=\infty.
\end{cases}
\end{equation*}
}
 Furthermore, we say a measure $\mu_0$ is an initial measure, provided that
 \begin{equation*}
    \mu_0\left(dt,dx\right) \defeq \delta_{(0,x_0)},
\end{equation*}
for some $x_0\in \mathcal X_0$.
\end{definition}
Observe that integration of a function $\phi \in C_c(\Rp\times \mathcal X)$ with respect to the terminal and initial measures $\mu_{\mathrm T}$ with stopping time \ks{$\bar t < \infty$} and $\mu_0$, respectively, is
\[
\int_{\Rp\times \mathcal X}\phi(t, x)d\mu_{\mathrm{T}} = \phi(\bar t, x(\bar t)), \qquad \int_{\Rp\times \mathcal X_0}\phi(t, x)d\mu_0 = \phi(0, x(0))
\]
for some $x(0) \in \mathcal X_0$.

Note that $\mathrm{spt}\mu$, $\mathrm{spt}\mu_{\text{T}}$, $\mathrm{spt}\mu_0 \subset [0, \infty)\times \mathcal X$ and the occupation measures corresponding to different admissible trajectories have different time supports in general. This corresponds to trajectories being defined only on some (possibly different) subintervals of $\Rp$.

\subsection{Liouville's equation}
Consider an ensemble of admissible trajectories $x(\cdot)$ satisfying the ODE \eqref{general_system}. Each trajectory may terminate at its own finite or infinite stopping time. Let \ks{$\phi\in C_c^{1}([0,\infty)\times \R^n)$} be a smooth compactly supported test function. Along every admissible trajectory $x(\cdot)$ we have
\[
\frac{d}{dt}\,\phi\bigl(t,x\bigr)
=\frac{\partial \phi\bigl(t,x\bigr)}{\partial t}+\nabla_x\phi\bigl(t,x\bigr)\!\cdot\!f\bigl(x\bigr).
\]
Integrating from $t=0$ to the terminal time $\bar t(x(\cdot))$ and then averaging with respect to the ensemble
of trajectories gives
\begin{align}
\label{eq_liouville_standard}
&\int_{[0,\infty)\times \mathcal X}\left(\frac{\partial \phi\bigl(t,x\bigr)}{\partial t}+\nabla_x\phi\bigl(t,x\bigr)\cdot f\bigl(x\bigr)\right)d\mu\\
&+\int_{\{0\}\times \mathcal X_0} \phi(t,x)\,d\mu_0(x)
-\int_{[0,\infty)\times \mathcal X}\phi(t,x)\,d\mu_{\mathrm T}(t,x)=0.\notag
\end{align}

This is the weak form of the Liouville's (continuity) equation on the infinite horizon with a time-distributed terminal measure. 

In distributional notation the equation \eqref{eq_liouville_standard} becomes,
\begin{equation}\label{liouville_state}
\partial_t\mu+\operatorname{div}(f\,\mu)
=\mu_0-\mu_{\mathrm T}.
\end{equation}

Equation \eqref{liouville_state} is to be understood in the sense of integrating test functions, as it was presented above. The equation \eqref{liouville_state} serves as the dynamics constraint $\dot{x} = f(x)$ together with the initial condition $x(0)\in \mathcal X_0$ (which is incorporated in the \ks{support of the} initial measure $\mu_0$) and one of its key properties is that it is linear in $(\mu,\mu_0,\mu_{\mathrm T})$. The choice of $\mass\mu_0$ is up to us; we can take $\mass\mu_0 = 1$. We also have $\mass\mu_{\mathrm T} \le \mass\mu_0$. However, $\mass \mu$ can be infinite, because \ks{the time horizon is unbounded and its time marginal is dominated by the Lebesgue measure}. For more details, we refer the reader to \cite{diperna_ordinary_1989} or \cite{korda_convex_2014} and references therein.

The following result says that any triple $(\mu_0, \mu, \mu_{\mathrm T})$ satisfying \eqref{eq_liouville_standard} is either an occupation measure or a superposition of occupation measures. Since occupation measures represent admissible trajectories, this result says that \eqref{eq_liouville_standard} truly encodes the dynamics. The proof of this statement is postponed to the appendix.

\begin{definition}
Let \(\Sinf\) denote the stopped-curve space consisting of pairs
\((\tau,\gamma(\cdot))\), where either
\[
\tau\in[0,\infty),
\qquad
\gamma\in C^1([0,\tau];\R^n),
\]
or
\[
\tau=\infty,
\qquad
\gamma\in C^1(\Rp;\R^n).
\]
\ks{We identify every finite stopped curve \(\gamma\in C^1([0,\tau];\mathbb R^n)\) with its constant extension \(\widetilde \gamma\in C([0,\infty))\), and equip \(\mathcal S_\infty\) with the Borel structure induced from \([0,\infty]\times C([0,\infty))\), where $C([0, \infty))$ is endowed with the topology of uniform convergence on every compact interval.
}
\end{definition}

\begin{theorem}[Superposition principle]
\label{thm:superposition}
Suppose that $f$ is globally Lipschitz on $\X$ and let
\[
\mu_0\in\mathcal P(\{0\}\times\mathcal X_0),
\qquad
\mu,\mu_{\mathrm T}\in\M_+([0,\infty)\times\mathcal X)
\]
be positive Radon measures satisfying  the Liouville's equation \eqref{eq_liouville_standard}. Then there exists a probability measure
\[
\eta\in\mathcal P(\mathcal S_\infty)
\]
such that, for \(\eta\)-almost every \((\tau,\gamma(\cdot))\), the curve \(\gamma\) is an admissible trajectory of \eqref{general_system} with stopping time \(\tau\).

Moreover,
\[
\mu_0=(e_0)_\#\eta,
\]
\[
\mu_{\mathrm T}
=
(e_{\mathrm T})_\#
\bigl(\eta|_{\{\tau<\infty\}}\bigr),
\]
and, for every compactly supported bounded Borel function
\[
h:[0,\infty)\times\mathcal X\to\mathbb R
\]
we have
\[
\int_{[0,\infty)\times\mathcal X}
h(t,x)\,d\mu(t,x)
=
\int_{\mathcal S_\infty}
\left(
\int_0^\tau h(t,\gamma(t))\,dt
\right)
d\eta(\tau,\gamma(\cdot)),
\]
where the inner integral is understood over \([0,\infty)\) if \(\tau=\infty\). Here
\[
e_0(\tau,\gamma(\cdot))=(0,\gamma(0)),
\]
and
\[
e_{\mathrm T}(\tau,\gamma(\cdot))=(\tau,\gamma(\tau)),
\qquad \tau<\infty.
\]
\end{theorem}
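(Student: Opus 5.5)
The plan is to use the flow map of the globally Lipschitz field $f$ to turn the Liouville equation \eqref{eq_liouville_standard} into a transport identity along characteristics. From that identity we read off how the mass leaves the system. For simplicity we first treat $\mathcal X$ as forward invariant, i.e. $f$ extended globally Lipschitz to $\R^n$ with flow $\Phi_t$. The constraint $x(t)\in\mathcal X$ is recovered at the end from $\mathrm{spt}\,\mu\subset[0,\infty)\times\mathcal X$.

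\textbf{Step 1 (reduction to a stopping-time kernel).} Push all measures forward by $(t,x)\mapsto(t,\Phi_{-t}(x))$, which straightens the characteristics. For a test function of the form $\phi(t,x)=\psi(t,\Phi_{-t}(x))$ with $\psi\in C^1_c$, we have $\partial_t\phi+\nabla_x\phi\cdot f=(\partial_t\psi)(t,\Phi_{-t}x)$. Hence, writing $\nu,\nu_{\mathrm T}$ for the transformed measures, \eqref{eq_liouville_standard} becomes
\[
\int \partial_t\psi\,d\nu+\int\psi(0,a)\,d\mu_0(a)-\int\psi\,d\nu_{\mathrm T}=0 ,
\]
which says $\partial_t\nu=\mu_0\otimes\delta_0-\nu_{\mathrm T}$ in the variable $t$, with $a$ frozen. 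Density of such $\psi$ requires $\Phi$ to be a $C^1$ diffeomorphism. If $f$ is only Lipschitz, I would first mollify test functions and use that $\Phi_t$ is bi-Lipschitz, so that \eqref{eq_liouville_standard} extends to Lipschitz compactly supported test functions by dominated convergence.

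\textbf{Step 2 (disintegration).} Disintegrate $\nu_{\mathrm T}$ with respect to its $a$-marginal $m$. Testing with $\psi(t,a)=\chi(a)$, cut off in $t$ far out, shows that $m\le\mu_0$, so $\nu_{\mathrm T}(dt,da)=\rho_a(dt)\,\mu_0(da)$ with $\rho_a$ a sub-probability on $[0,\infty)$. The one-dimensional equation then forces
\[
\nu(dt,da)=\rho_a\bigl((t,\infty)\bigr)\,dt\,\mu_0(da)+\text{(possible extra part)} .
\]
More precisely, its $t$-distributional derivative equals $-\rho_a$ away from $0$ with a jump of $1$ at $0$. Since $\nu\ge0$ is Radon and any additive constant in $t$ would contradict local finiteness at $t\to\infty$, the density is $\rho_a((t,\infty))+c_a$ with $c_a\ge 0$. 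The constant must be $c_a=0$ because the initial jump is exactly $1$, so the density equals $1-\rho_a([0,t])$. This is the key step, and the missing mass $1-\rho_a([0,\infty))$ is precisely the mass of trajectories that never stop.

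\textbf{Step 3 (the measure $\eta$).} Extend $\rho_a$ to a probability $\bar\rho_a$ on $[0,\infty]$ by placing the remaining mass at $\infty$. Then define $\eta$ as the pushforward of $\bar\rho_a(d\tau)\,\mu_0(da)$ under $(\tau,a)\mapsto(\tau,\Phi_\cdot(a)|_{I_\tau})$. Measurability of this map into $\mathcal S_\infty$ follows from continuity of $a\mapsto\Phi_\cdot(a)$ in the local-uniform topology. The three identities are then direct computations:
\begin{itemize}
\item $(e_0)_\#\eta=\mu_0$ holds by construction;
\item $(e_{\mathrm T})_\#(\eta|_{\{\tau<\infty\}})$ is the image of $\rho_a\mu_0$, which is $\mu_{\mathrm T}$;
\item for the occupation measure, Fubini gives $\int\!\int_0^\tau h(t,\Phi_t a)\,dt\,d\bar\rho_a=\int h(t,\Phi_t a)\,\bar\rho_a((t,\infty])\,dt$, which matches Step 2.
\end{itemize}
Formulas are first checked for $C_c$ functions $h$ and then extended to bounded compactly supported Borel $h$ by a monotone class argument. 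Admissibility requires $\Phi_t(a)\in\mathcal X$ for $t\in I_\tau$. This holds $\eta$-a.e. since $\mathrm{spt}\,\mu\subset[0,\infty)\times\mathcal X$ and $\mathcal X$ is closed, using continuity of the curves; the endpoint $t=\tau$ is covered via $\mathrm{spt}\,\mu_{\mathrm T}$.

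The main obstacle is Step 2. It requires justifying the one-dimensional ODE fiberwise for $\mu_0$-a.e. $a$ from a single integrated identity, and ruling out spurious solutions with mass that persists without stopping. I would handle both by testing with products $\chi(a)\theta(t)$, using a countable dense family of $\chi$, and exploiting positivity.
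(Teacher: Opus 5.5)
Your argument is the paper's proof in straightened (Lagrangian) coordinates. The correspondences are as follows. Your bound $m\le\mu_0$ is the paper's $B_\#\mu_{\mathrm T}\le\mu_0$ with $B(\tau,x)=(0,\Phi_{-\tau}(x))$, obtained with the same monotone cut-off test functions. Your non-stopping mass $(1-\rho_a([0,\infty)))\,\mu_0(da)$ is the paper's residual $\mu_\infty^0=\mu_0-B_\#\mu_{\mathrm T}$. Your $\eta$ is the paper's $\eta$, except that the paper indexes finitely stopped curves by their terminal point, $\gamma_{\tau,x}(s)=\Phi_{s-\tau}(x)$, and pushes $\mu_{\mathrm T}$ forward directly, so it never needs the kernels $\rho_a$. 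In both versions the occupation identity comes from an explicit candidate plus a uniqueness argument.

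The step to repair is the justification in Step 2, which is wrong in two places as written. First, a constant density $c_a\,dt$ is locally finite, so local finiteness excludes nothing. Second, the constant is not $0$ but $c_a=1-\rho_a([0,\infty))$; with $c_a=0$ your density would be $\rho_a((t,\infty))$, not $1-\rho_a([0,t])$, contradicting your next sentence.

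What actually determines the solution is that test functions may be nonzero at $t=0$. For every $g\in C^1_c([0,\infty)\times\mathbb{R}^n)$, the function $\psi(t,a):=-\int_t^\infty g(s,a)\,ds$ is again a compactly supported $C^1$ test function with $\partial_t\psi=g$. So define $\bar\nu(dt,da):=(1-\rho_a([0,t]))\,dt\,\mu_0(da)$ and check, by your Fubini computation, that it solves the straightened equation. Then $\lambda:=\nu-\bar\nu$ is a signed Radon measure with $\int g\,d\lambda=0$ for all such $g$, hence $\lambda=0$.

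This global argument is exactly the paper's uniqueness step: pulled back by the flow, your $\psi$ is the paper's $\varphi(t,x)=-\int_t^\infty h(s,\Phi_{s-t}(x))\,ds$. It also disposes of both obstacles you list. You never disintegrate the possibly infinite-mass transformed occupation measure over $\mu_0$, which you could not do a priori since its $a$-marginal is not yet known to be $\ll\mu_0$. And there are no spurious persistent solutions to exclude, because the persistent mass is precisely the $\tau=\infty$ part of $\eta$.

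A smaller point concerns the merely Lipschitz case. Extending the equation to arbitrary Lipschitz test functions is not meaningful, since $L\phi$ is then only Lebesgue-a.e.\ defined while $\mu$ is typically singular. For $\phi(t,x)=\psi(t,\Phi_{-t}(x))$ it does work. There $L\phi$ agrees a.e.\ with the continuous function $(\partial_t\psi)(t,\Phi_{-t}(x))$. The commutator term $(f(x)-f(y))\cdot\nabla\phi(s,y)$ produced by mollification is $O(\mathrm{Lip}(f)\,\varepsilon)$. Hence $L$ of the mollified test functions converges uniformly.
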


\subsection{Discounted Liouville's equation}
\label{sec:discounted_liouville}
\chat{Inspired by \cite{bhatt_occupation_1996}, we introduce the discounted
measures
\[
\nu:=e^{-\alpha t}\mu,
\qquad
\nu_{\mathrm T}:=e^{-\alpha t}\mu_{\mathrm T},
\qquad \alpha>0.
\]
For $\phi\in C^1([0,\infty)\times\mathcal X)$, define
\[
\mathcal L_\alpha^t\phi(t,x)
:=
\frac{\partial\phi}{\partial t}(t,x)
+\nabla_x\phi(t,x)\cdot f(x)
-\alpha\phi(t,x)
\]
and introduce the measure-independent test-function space
\[
\mathscr D_\alpha^t
:=
\left\{
\phi\in C^1([0,\infty)\times\mathcal X):
\phi\in C_b([0,\infty)\times\mathcal X),\quad
\mathcal L_\alpha^t\phi\in C_b([0,\infty)\times\mathcal X)
\right\}.
\]
We say that positive Radon measures
\[
\mu_0\in\mathcal M_+(\{0\}\times\mathcal X_0),
\qquad
\nu,\nu_{\mathrm T}\in
\mathcal M_+([0,\infty)\times\mathcal X)
\]
satisfy the discounted Liouville equation if, for every
$\phi\in\mathscr D_\alpha^t$, each integral in the following identity is
finite and}
\begin{align}\label{eq:discounted_liouville}
    \int_{[0, \infty)\times \mathcal X}\left(\frac{\partial \phi}{\partial t} + \nabla_x \phi \cdot f - \alpha \phi\right)d\nu + \int_{\{0\}\times \mathcal X_0}\phi d\mu_0 - \int_{[0, \infty)\times \mathcal X}\phi d\nu_{\mathrm T} = 0.
\end{align}
\chat{The constant function $1$ belongs to $\mathscr D_\alpha^t$ and
$\mathcal L_\alpha^t1=-\alpha$. Consequently, testing
\eqref{eq:discounted_liouville} with $\phi = 1$ gives
\[
\alpha\,\mass\nu+\mass\nu_{\mathrm T}=\mass\mu_0.
\]
Thus $\nu$ and $\nu_{\mathrm T}$ have finite total mass whenever $\mu_0$
does. The additional sink term therefore accounts exactly for the mass of
trajectories that continue for an infinite time.}

\chat{\begin{proposition}\label{prop:discounted_equivalence}
Let $\alpha>0$ and suppose that the hypotheses of
Theorem~\ref{thm:superposition} hold. Then
\[
(\mu_0,\mu,\mu_{\mathrm T})
\longmapsto
(\mu_0,e^{-\alpha t}\mu,e^{-\alpha t}\mu_{\mathrm T})
\]
is a one-to-one correspondence between positive-measure solutions of the
Liouville equation~\eqref{eq_liouville_standard} with
$\mu_0\in\mathcal P(\{0\}\times\mathcal X_0)$ and positive-measure
solutions of the discounted Liouville equation
\eqref{eq:discounted_liouville} with the same initial measure. Its inverse is
\[
(\mu_0,\nu,\nu_{\mathrm T})
\longmapsto
(\mu_0,e^{\alpha t}\nu,e^{\alpha t}\nu_{\mathrm T}).
\]
\end{proposition}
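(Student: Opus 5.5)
We prove the two directions separately, then check that the maps are mutually inverse, which is immediate since multiplication by $e^{-\alpha t}$ and $e^{\alpha t}$ are inverse operations on Radon measures on $[0,\infty)\times\mathcal X$ (the density is continuous and positive).

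\emph{Forward direction.} Let $(\mu_0,\mu,\mu_{\mathrm T})$ solve \eqref{eq_liouville_standard} with $\mu_0$ a probability measure. By Theorem~\ref{thm:superposition} there is $\eta\in\mathcal P(\mathcal S_\infty)$ representing $\mu$ and $\mu_{\mathrm T}$ by stopped admissible trajectories. Take $\phi\in\mathscr D_\alpha^t$. Along each admissible curve $(\tau,\gamma)$ the function $\psi(t)=e^{-\alpha t}\phi(t,\gamma(t))$ satisfies
\[
\psi'(t)=e^{-\alpha t}\,\mathcal L_\alpha^t\phi(t,\gamma(t)).
\]
Integrating over $[0,\tau]$ gives $e^{-\alpha\tau}\phi(\tau,\gamma(\tau))-\phi(0,\gamma(0))$. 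For $\tau=\infty$ the boundary term vanishes because $\phi$ is bounded. Since $\mathcal L_\alpha^t\phi$ is bounded, the integrand is dominated by $\|\mathcal L_\alpha^t\phi\|_\infty e^{-\alpha t}$, which is integrable uniformly in $\tau$. We then integrate against $\eta$ and use Fubini. To apply the representation formula of Theorem~\ref{thm:superposition}, which is stated for compactly supported bounded $h$, we first take truncations $h_R=e^{-\alpha t}\mathcal L_\alpha^t\phi\,\mathbf 1_{[0,R]\times B_R}$ and pass $R\to\infty$ by monotone/dominated convergence, splitting into positive and negative parts. This yields \eqref{eq:discounted_liouville} for $(\mu_0,e^{-\alpha t}\mu,e^{-\alpha t}\mu_{\mathrm T})$. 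Finiteness of each integral follows from the same domination: for instance, $\int e^{-\alpha t}d\mu=\int\int_0^\tau e^{-\alpha t}dt\,d\eta\le 1/\alpha$.

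\emph{Reverse direction.} This is the main obstacle, since no superposition result is available for the discounted equation. Given $(\mu_0,\nu,\nu_{\mathrm T})$ solving \eqref{eq:discounted_liouville}, set $\mu=e^{\alpha t}\nu$ and $\mu_{\mathrm T}=e^{\alpha t}\nu_{\mathrm T}$. These are positive Radon measures: they are locally finite because $e^{\alpha t}$ is bounded on compacts and $\nu,\nu_{\mathrm T}$ have finite mass. For $\phi\in C_c^1([0,\infty)\times\mathbb R^n)$ we test \eqref{eq:discounted_liouville} with $\varphi=e^{\alpha t}\phi$. This $\varphi$ is $C^1$ and compactly supported, so it and
\[
\mathcal L_\alpha^t\varphi=e^{\alpha t}\left(\partial_t\phi+\nabla_x\phi\cdot f\right)
\]
are bounded and continuous, provided $f$ is continuous, which holds since it is Lipschitz. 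Hence $\varphi\in\mathscr D_\alpha^t$. Substituting gives exactly \eqref{eq_liouville_standard} for $(\mu_0,\mu,\mu_{\mathrm T})$; the $-\alpha$ terms cancel through the derivative of $e^{\alpha t}$.

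Thus both maps land in the correct solution sets and compose to the identity, giving the bijection. The only delicate points are the truncation/limit argument in the forward direction and the verification that $e^{\alpha t}\phi\in\mathscr D_\alpha^t$.
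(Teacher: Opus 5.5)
Your proposal is correct and follows the paper's proof essentially step for step. In the forward direction you integrate $\frac{d}{dt}\bigl(e^{-\alpha t}\phi(t,\gamma(t))\bigr)=e^{-\alpha t}\mathcal L_\alpha^t\phi(t,\gamma(t))$ along the trajectories from Theorem~\ref{thm:superposition}, extending the occupation identity beyond compactly supported integrands by monotone/dominated convergence; in the reverse direction you test the discounted equation with $e^{\alpha t}\psi$, $\psi\in C_c^1$, exactly as the paper does. One small correction of emphasis: the reverse direction is the elementary one, needing no superposition as your own argument shows, and the real work is the forward direction's use of Theorem~\ref{thm:superposition}.
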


\begin{proof}
Let $(\mu_0,\mu,\mu_{\mathrm T})$ satisfy
\eqref{eq_liouville_standard}. By Theorem~\ref{thm:superposition}, it has a
representation by a probability measure $\eta\in\mathcal P(\mathcal S_\infty)$.
\chat{The occupation identity in that theorem extends from compactly
supported bounded Borel functions to nonnegative Borel functions by
monotone convergence.}
The measures
\[
d\nu(t,x):=e^{-\alpha t}\,d\mu(t,x),
\qquad
d\nu_{\mathrm T}(t,x):=e^{-\alpha t}\,d\mu_{\mathrm T}(t,x)
\]
are finite, since
\[
\mass\nu
=
\int_{\mathcal S_\infty}\int_0^\tau e^{-\alpha t}\,dt\,d\eta
\leq\frac1\alpha,
\qquad
\mass\nu_{\mathrm T}
=
\int_{\{\tau<\infty\}}e^{-\alpha\tau}\,d\eta
\leq1.
\]
\chat{By applying the preceding monotone-convergence extension to the
positive and negative parts, the occupation identity may now also be used
for every Borel integrand that is absolutely integrable with respect to
$\mu$.}
Let $\phi\in\mathscr D_\alpha^t$. Along every represented trajectory,
\[
\frac{d}{dt}\left(e^{-\alpha t}\phi(t,x(t))\right)
=e^{-\alpha t}\mathcal L_\alpha^t\phi(t,x(t)).
\]
If $\tau=\infty$, the terminal term vanishes because $\phi$ is bounded.
Integrating this identity up to the stopping time, integrating with
respect to $\eta$, and using the occupation and terminal representations
gives \eqref{eq:discounted_liouville}. All its integrals
are finite because both $\phi$ and $\mathcal L_\alpha^t\phi$ are bounded.

Conversely, let $(\mu_0,\nu,\nu_{\mathrm T})$ satisfy
\eqref{eq:discounted_liouville}, and define
\[
d\mu(t,x):=e^{\alpha t}\,d\nu(t,x),
\qquad
d\mu_{\mathrm T}(t,x):=e^{\alpha t}\,d\nu_{\mathrm T}(t,x).
\]
These are Radon measures because the multiplier $e^{\alpha t}$ is bounded
on compact time intervals. For any
$\psi\in C_c^1([0,\infty)\times\mathcal X)$, the function
\[
\phi(t,x):=e^{\alpha t}\psi(t,x)
\]
belongs to $\mathscr D_\alpha^t$ and satisfies
\[
\mathcal L_\alpha^t\phi
=e^{\alpha t}\left(\frac{\partial\psi}{\partial t}
+\nabla_x\psi\cdot f\right).
\]
Substitution in \eqref{eq:discounted_liouville} gives
\eqref{eq_liouville_standard}. The two measure transformations are plainly
mutually inverse.
\end{proof}

The following result is an immediate consequence of
Theorem~\ref{thm:superposition} and
Proposition~\ref{prop:discounted_equivalence}.

\begin{corollary}[Superposition principle for the discounted equation]
\label{cor:superposition_discounted}
\chat{Suppose that the hypotheses of Theorem~\ref{thm:superposition}
hold.} Let $\alpha>0$ and let
\[
\mu_0\in\mathcal P(\{0\}\times\mathcal X_0),
\qquad
\nu,\nu_{\mathrm T}\in\mathcal M_+([0,\infty)\times\mathcal X)
\]
satisfy \eqref{eq:discounted_liouville}. Then there exists
$\eta\in\mathcal P(\mathcal S_\infty)$ such that, for $\eta$-almost every
$(\tau,\gamma)$, the curve $\gamma$ is an admissible trajectory of
\eqref{general_system} with stopping time $\tau$. Moreover, for every
bounded Borel function $h:[0,\infty)\times\mathcal X\to\mathbb R$,
\[
\int_{[0,\infty)\times\mathcal X}h(t,x)\,d\nu(t,x)
=
\int_{\mathcal S_\infty}
\left(\int_0^\tau e^{-\alpha t}h(t,\gamma(t))\,dt\right)
d\eta(\tau,\gamma),
\]
where the inner integral is over $[0,\infty)$ if $\tau=\infty$, and
\[
\int_{[0,\infty)\times\mathcal X}h(t,x)\,d\nu_{\mathrm T}(t,x)
=
\int_{\{\tau<\infty\}}
e^{-\alpha\tau}h(\tau,\gamma(\tau))\,d\eta(\tau,\gamma).
\]
Finally,
\[
\mu_0=(\widetilde e_0)_\#\eta,
\qquad
\widetilde e_0(\tau,\gamma):=(0,\gamma(0)).
\]
\end{corollary}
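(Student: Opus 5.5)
The plan is to reduce the corollary to Theorem~\ref{thm:superposition} through the inverse map of Proposition~\ref{prop:discounted_equivalence}, and then transport the representation back by the factor $e^{-\alpha t}$.

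\emph{Step 1: undo the discount.} Given $(\mu_0,\nu,\nu_{\mathrm T})$ solving \eqref{eq:discounted_liouville}, I set $\mu:=e^{\alpha t}\nu$ and $\mu_{\mathrm T}:=e^{\alpha t}\nu_{\mathrm T}$. By the converse part of Proposition~\ref{prop:discounted_equivalence}, these are positive Radon measures on $[0,\infty)\times\mathcal X$. That part of the proof tests \eqref{eq:discounted_liouville} with $\phi=e^{\alpha t}\psi$ for $\psi\in C_c^1$, and this $\phi$ lies in $\mathscr D_\alpha^t$ because it is compactly supported. It follows that $(\mu_0,\mu,\mu_{\mathrm T})$ satisfies \eqref{eq_liouville_standard}. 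Since $\mu_0$ is a probability measure, Theorem~\ref{thm:superposition} applies. It yields $\eta\in\mathcal P(\mathcal S_\infty)$ concentrated on admissible stopped trajectories, with $\mu_0=(e_0)_\#\eta$ (note $\widetilde e_0=e_0$), with $\mu_{\mathrm T}=(e_{\mathrm T})_\#(\eta|_{\{\tau<\infty\}})$, and with the occupation identity for compactly supported bounded Borel $h$.

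\emph{Step 2: the occupation identity for $\nu$.} Fix a bounded Borel $h$, and assume first $h\ge 0$. Apply the occupation identity of Theorem~\ref{thm:superposition} to $h_k:=e^{-\alpha t}h\,\mathbf 1_{[0,k]\times B_k}$, where $B_k$ is the closed ball of radius $k$ intersected with $\mathcal X$. Each $h_k$ is bounded, Borel and compactly supported. Letting $k\to\infty$ and using monotone convergence on both sides gives
\[
\int h\,d\nu=\int e^{-\alpha t}h\,d\mu=\int_{\mathcal S_\infty}\int_0^\tau e^{-\alpha t}h(t,\gamma(t))\,dt\,d\eta .
\]
The inner integrals are nondecreasing in $k$, since $\mathbf 1_{B_k}(\gamma(t))\uparrow 1$ pointwise. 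Both sides are at most $\|h\|_\infty/\alpha$, so they are finite. For a general bounded $h$, split $h=h^+-h^-$; linearity is legitimate because everything is finite.

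\emph{Step 3: the terminal identity.} For $\nu_{\mathrm T}$, the pushforward formula gives $\int g\,d\mu_{\mathrm T}=\int_{\{\tau<\infty\}}g(\tau,\gamma(\tau))\,d\eta$ for every nonnegative Borel $g$. This holds directly by the definition of pushforward, with no compact support needed. Taking $g=e^{-\alpha t}h^{\pm}$ and subtracting gives the stated formula, and finiteness follows from $e^{-\alpha\tau}|h|\le\|h\|_\infty$ and $\eta(\mathcal S_\infty)=1$.

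The only delicate point is Step 2. Theorem~\ref{thm:superposition} is stated only for compactly supported integrands, while the corollary allows arbitrary bounded Borel $h$ on a noncompact domain. The truncation argument combined with monotone convergence handles this, and the discount factor guarantees finiteness. I also need to check that $e^{\alpha t}\nu$ is Radon, which holds because $e^{\alpha t}$ is bounded on compact sets. Everything else is bookkeeping.
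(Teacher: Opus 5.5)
Your proposal is correct and follows the paper's proof: invert the discount via the converse direction of Proposition~\ref{prop:discounted_equivalence}, apply Theorem~\ref{thm:superposition}, and multiply the occupation and terminal representations back by $e^{-\alpha t}$. Your truncation and monotone-convergence argument (and the remark that the terminal identity holds directly by pushforward) simply spells out the extension to all bounded Borel $h$, which the paper justifies in one line by the finiteness of the discounted measures.
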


\begin{proof}
Apply Proposition~\ref{prop:discounted_equivalence} to recover a solution
$(\mu_0,\mu,\mu_{\mathrm T})$ of
\eqref{eq_liouville_standard}, and then apply
Theorem~\ref{thm:superposition}. Multiplying the resulting occupation and
terminal measures by $e^{-\alpha t}$ gives the displayed identities. The
identities extend from compactly supported bounded Borel functions to all
bounded Borel functions because the two discounted measures are finite.
\end{proof}}

\subsection{Discounted Liouville's equation with artificial clock variable}
\label{sec:discounted_clock}

In this section, we combine the time-discounting approach of
Section~\ref{sec:discounted_liouville} with the compactification of the
time axis via an artificial clock variable $s$. Consider the augmented system
\begin{align}
\label{augmented_system}
    \dot x(t) &= f(x(t)),
    \qquad x(t)\in\mathcal X,
    \qquad x(0)\in\mathcal X_0,\\
    \dot s(t) &= -\alpha s(t),
    \qquad s(t)\in[0,1],
    \qquad s(0)=1,\notag
\end{align}
for some \(\alpha>0\). The solution of the clock dynamics is
\[
s(t)=e^{-\alpha t}.
\]
Hence, the map
\[
t\mapsto s(t)=e^{-\alpha t}
\]
is a bijection from \([0,\infty)\) onto \((0,1]\), with inverse
\[
t(s)=-\frac{1}{\alpha}\log s.
\]
By setting \(s(\infty)=0\), this map extends to a homeomorphism from the
one-point compactification \([0,\infty]\) onto \([0,1]\).
Since \(t\) is uniquely determined by \(s\) for every finite time, the discounted occupation and terminal measures of the augmented system (introduced in the previous section) are completely determined by their marginals on the variables \((s,x)\). Thus, it is sufficient to use test functions depending only on $(s, x)$ to represent the dynamics via the discounted Liouville's equation \eqref{eq:discounted_liouville}.

\chat{For $\phi\in C^1([0,1]\times\mathcal X)$, define the discounted
clock Liouville operator
\[
\mathcal L_\alpha^s\phi(s,x)
:=
-\alpha s\frac{\partial\phi}{\partial s}(s,x)
+\nabla_x\phi(s,x)\cdot f(x)
-\alpha\phi(s,x)
\]
and the test-function space
\[
\mathscr D_\alpha^s
:=
\left\{
\phi\in C^1([0,1]\times\mathcal X):
\phi\in C_b([0,1]\times\mathcal X),\quad
\mathcal L_\alpha^s\phi\in C_b([0,1]\times\mathcal X)
\right\}.
\]
We say that positive Radon measures
\[
\mu_0\in\mathcal M_+(\{1\}\times\mathcal X_0),
\qquad
\nu,\nu_{\mathrm T}\in\mathcal M_+([0,1]\times\mathcal X)
\]
satisfy the discounted clock Liouville equation if, for every
$\phi\in\mathscr D_\alpha^s$, each integral below is finite and}
\begin{align}
\label{eq:discounted_clock_liouville}
&\int_{[0,1]\times\mathcal X}
\left(
-\alpha s\,\frac{\partial\phi(s,x)}{\partial s}
+\nabla_x\phi(s,x)\cdot f(x)
-\alpha\phi(s,x)
\right)d\nu(s,x)
\\
&\quad
+\int_{\{1\}\times\mathcal X_0}
\phi(s,x)\,d\mu_0(s,x)
-\int_{[0,1]\times\mathcal X}
\phi(s,x)\,d\nu_{\mathrm T}(s,x)
=0.
\notag
\end{align}

\chat{The function $1$ belongs to $\mathscr D_\alpha^s$ and
$\mathcal L_\alpha^s1=-\alpha$. Testing
\eqref{eq:discounted_clock_liouville} with $\phi = 1$ gives
\[
\alpha\,\mass\nu+\mass\nu_{\mathrm T}=\mass\mu_0.
\]
Consequently, $\nu$ and $\nu_{\mathrm T}$ have finite total mass whenever
$\mu_0$ does. }

\chat{\begin{proposition}\label{prop:discounted_clock_equivalence}
Let $\alpha>0$ and define
\[
g:[0,\infty)\times\mathcal X\longrightarrow(0,1]\times\mathcal X,
\qquad
g(t,x):=(e^{-\alpha t},x).
\]
Then
\[
(\mu_0,\nu,\nu_{\mathrm T})
\longmapsto
(\widehat\mu_0,\widehat\nu,\widehat\nu_{\mathrm T})
:=(g_\#\mu_0,g_\#\nu,g_\#\nu_{\mathrm T})
\]
is a one-to-one correspondence between solutions of the discounted
Liouville equation~\eqref{eq:discounted_liouville} and solutions of the
discounted clock Liouville equation
\eqref{eq:discounted_clock_liouville}; no compactness of $\mathcal X$ is
required. Every solution of the latter
equation satisfies
\[
\widehat\nu(\{0\}\times\mathcal X)
=
\widehat\nu_{\mathrm T}(\{0\}\times\mathcal X)
=0.
\]
Consequently, the inverse correspondence is
\[
\mu_0=(g^{-1})_\#\widehat\mu_0,
\qquad
\nu=(g^{-1})_\#
\left(\left.\widehat\nu\right|_{(0,1]\times\mathcal X}\right),
\qquad
\nu_{\mathrm T}=(g^{-1})_\#
\left(\left.\widehat\nu_{\mathrm T}\right|_{(0,1]\times\mathcal X}\right),
\]
where $g^{-1}(s,x)=(-\alpha^{-1}\log s,x)$ for $s>0$.
\end{proposition}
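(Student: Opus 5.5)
The plan is to transport test functions between the two equations through the change of variables $s=e^{-\alpha t}$, and to handle the boundary slice $\{s=0\}$ separately. For the forward direction, take a solution $(\mu_0,\nu,\nu_{\mathrm T})$ of \eqref{eq:discounted_liouville} and $\phi\in\mathscr D_\alpha^s$. Define $\psi(t,x):=\phi(e^{-\alpha t},x)$. By the chain rule,
\[
\partial_t\psi(t,x)=-\alpha e^{-\alpha t}\,\partial_s\phi(e^{-\alpha t},x),
\]
so $\mathcal L_\alpha^t\psi=(\mathcal L_\alpha^s\phi)\circ g$. Hence $\psi$ is $C^1$, bounded, and has bounded $\mathcal L_\alpha^t\psi$, which means $\psi\in\mathscr D_\alpha^t$. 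Writing $\int\chi\,d(g_\#\nu)=\int\chi\circ g\,d\nu$ for each of the three measures turns \eqref{eq:discounted_liouville} tested with $\psi$ into \eqref{eq:discounted_clock_liouville} tested with $\phi$. Finiteness of the integrals is inherited. The pushforward measures are Radon on $[0,1]\times\mathcal X$ because they are finite, since $\alpha\,\mass\nu+\mass\nu_{\mathrm T}=\mass\mu_0$, and are Borel on a locally compact separable metric space. Also $g_\#\mu_0$ lives on $\{1\}\times\mathcal X_0$.

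Next I show that every solution of \eqref{eq:discounted_clock_liouville} puts no mass on $\{0\}\times\mathcal X$. I will test with functions of the clock variable alone, $\phi(s,x)=\chi(s)$ with $\chi\in C^1([0,1])$. Then $\mathcal L_\alpha^s\phi=-\alpha s\chi'(s)-\alpha\chi(s)$ is bounded, so $\phi\in\mathscr D_\alpha^s$. Take $\chi_\varepsilon(s)=(1-s/\varepsilon)_+^2$, or a $C^1$ smoothing of it. It equals $1$ at $s=0$, is supported in $[0,\varepsilon]$, and $s|\chi_\varepsilon'(s)|\le 2$. The identity then reads
\[
\int\bigl(-\alpha s\chi_\varepsilon'-\alpha\chi_\varepsilon\bigr)\,d\widehat\nu-\int\chi_\varepsilon\,d\widehat\nu_{\mathrm T}=0,
\]
because the $\mu_0$ term vanishes for $\varepsilon<1$. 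Note that $-s\chi_\varepsilon'(s)=\tfrac{2s}{\varepsilon}(1-s/\varepsilon)_+\ge0$, is bounded, and tends to $0$ pointwise as $\varepsilon\to0$, including at $s=0$. By dominated convergence against the finite measures, letting $\varepsilon\to0$ gives
\[
-\alpha\,\widehat\nu(\{0\}\times\mathcal X)-\widehat\nu_{\mathrm T}(\{0\}\times\mathcal X)=0.
\]
Both masses are nonnegative, so both vanish. Finite mass of $\widehat\nu$ and $\widehat\nu_{\mathrm T}$ comes from testing with $\phi=1$.

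For the inverse direction, restrict to $(0,1]\times\mathcal X$ and push forward by $g^{-1}$. This gives Radon measures on $[0,\infty)\times\mathcal X$. Given $\psi\in\mathscr D_\alpha^t$, I want $\phi:=\psi\circ g^{-1}$ as a clock test function, and it satisfies the identity on $(0,1]$ by the same chain rule. The obstacle is that $\phi$ need not extend $C^1$ (or even continuously) to $s=0$, so $\phi\notin\mathscr D_\alpha^s$ in general. I plan to get around this by cutoff. Use the $\chi_\varepsilon$ above and set $\phi_\varepsilon:=(1-\chi_\varepsilon(s))\,\phi$, which vanishes near $s=0$ and therefore lies in $\mathscr D_\alpha^s$. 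Then
\[
\mathcal L_\alpha^s\phi_\varepsilon=(1-\chi_\varepsilon)\mathcal L_\alpha^s\phi+\alpha s\chi_\varepsilon'\,\phi,
\]
which is uniformly bounded in $\varepsilon$ since $\phi$ and $\mathcal L_\alpha^s\phi$ are bounded and $s|\chi_\varepsilon'|\le2$. It converges pointwise on $(0,1]$ to $\mathcal L_\alpha^s\phi$. Since $\widehat\nu$ and $\widehat\nu_{\mathrm T}$ vanish on $\{s=0\}$ and are finite, dominated convergence yields the identity for $\phi$ on $(0,1]$. Pulling back gives \eqref{eq:discounted_liouville} for $\psi$.

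Finally, the two maps are mutually inverse: $g$ is a bijection onto $(0,1]\times\mathcal X$, and by the previous step nothing is lost by restricting to $(0,1]$. This justifies the stated formula for the inverse. No compactness of $\mathcal X$ enters anywhere, because only bounded test functions and finite measures are used. The main delicate step is the cutoff argument near $s=0$, which requires the uniform bound on $s\chi_\varepsilon'$.
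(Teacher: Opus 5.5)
Your architecture is the paper's: the chain rule for the forward direction, clock-only test functions concentrating at $s=0$ to show there is no mass there, and a cutoff plus dominated convergence for the inverse direction. Your zero-mass step is fine. Note that $(1-s/\varepsilon)_+^2$ is already $C^1$ on $[0,1]$ and plays the role of the paper's $e^{-ms}$.

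\textbf{The gap is in the inverse direction.} With $\chi_\varepsilon(s)=(1-s/\varepsilon)_+^2$ you get $1-\chi_\varepsilon(s)=2s/\varepsilon-s^2/\varepsilon^2>0$ for $0<s\le\varepsilon$. So $\phi_\varepsilon=(1-\chi_\varepsilon)\phi$ does \emph{not} vanish near $s=0$; it is only $O(s)$ there. Since $\phi=\psi\circ g^{-1}$ may oscillate as $s\to0$, $\phi_\varepsilon$ need not be differentiable at $s=0$. Take $\psi(t,x)=\sin t$, which lies in $\mathscr D_\alpha^t$ because $\mathcal L_\alpha^t\psi=\cos t-\alpha\sin t$. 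Then $\phi_\varepsilon(s,x)/s\approx(2/\varepsilon)\sin(\alpha^{-1}\log(1/s))$ has no limit as $s\to0^+$. Hence $\phi_\varepsilon\notin\mathscr D_\alpha^s$, and you cannot insert it into \eqref{eq:discounted_clock_liouville}. The properties you list (value $1$ at $0$, support in $[0,\varepsilon]$, $s|\chi_\varepsilon'|\le2$) do not give the flatness at $0$ you need, and a generic $C^1$ smoothing does not supply it either.

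\textbf{The repair is easy.} Take $\chi_\varepsilon(s)=\theta(s/\varepsilon)$ with $\theta\in C^1([0,\infty))$, $\theta\equiv1$ on $[0,\tfrac12]$ and $\theta\equiv0$ on $[1,\infty)$. Then:
\begin{itemize}
\item $\phi_\varepsilon$ vanishes on $[0,\varepsilon/2]\times\mathcal X$ and lies in $\mathscr D_\alpha^s$;
\item $s|\chi_\varepsilon'|\le\|\theta'\|_\infty$;
\item $\chi_\varepsilon\to\mathbf 1_{\{0\}}$ and $s\chi_\varepsilon'\to0$ pointwise.
\end{itemize}
So both your zero-mass argument and your dominated-convergence argument go through verbatim.

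\textbf{Comparison with the paper.} With this fix your proof is a genuine variant of the paper's. The paper cuts off in time, $\psi_R=\rho(t/R)\psi$, which in the clock variable is a logarithmic cutoff between $s=e^{-2\alpha R}$ and $s=e^{-\alpha R}$. Its error term $\rho_R'\psi$ is $O(1/R)$ in sup norm, so finiteness of $\nu$ alone kills it. Your linear cutoff in $s$ leaves an error term $\alpha s\chi_\varepsilon'\phi$ that is only uniformly bounded and tends to zero pointwise. That must be handled by dominated convergence, which works because the measures are finite.
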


\begin{proof}
Let $(\mu_0,\nu,\nu_{\mathrm T})$ satisfy
\eqref{eq:discounted_liouville}, and define
\[
\widehat\mu_0:=g_\#\mu_0,
\qquad
\widehat\nu:=g_\#\nu,
\qquad
\widehat\nu_{\mathrm T}:=g_\#\nu_{\mathrm T}.
\]
These are finite Radon measures on their stated clock-space domains.
For $\phi\in\mathscr D_\alpha^s$, set
\[
\psi(t,x):=\phi(e^{-\alpha t},x).
\]
The function $\psi$ belongs to $\mathscr D_\alpha^t$, because it is bounded
and the chain rule gives
\[
\mathcal L_\alpha^t\psi(t,x)
=
\mathcal L_\alpha^s\phi(e^{-\alpha t},x),
\]
which is bounded. Applying \eqref{eq:discounted_liouville} to $\psi$ and
using the definition of pushforward gives
\eqref{eq:discounted_clock_liouville} for $\phi$. Thus
$(\widehat\mu_0,\widehat\nu,\widehat\nu_{\mathrm T})$ satisfies the
discounted clock equation.

Conversely, let
$(\widehat\mu_0,\widehat\nu,\widehat\nu_{\mathrm T})$ satisfy
\eqref{eq:discounted_clock_liouville}. Testing with $1$ shows that
$\widehat\nu$ and $\widehat\nu_{\mathrm T}$ are finite. For every
\claude{$m\in\mathbb N$}, the function
\[
\claude{\phi_m(s,x):=e^{-ms}}
\]
belongs to $\mathscr D_\alpha^s$, and
\[
\claude{\mathcal L_\alpha^s\phi_m(s,x)
=
\alpha(ms-1)e^{-ms}.}
\]
Consequently,
\[
\claude{\alpha\int_{[0,1]\times\mathcal X}(ms-1)e^{-ms}
\,d\widehat\nu(s,x)
+e^{-m}\mass\widehat\mu_0
-\int_{[0,1]\times\mathcal X}e^{-ms}
\,d\widehat\nu_{\mathrm T}(s,x)
=0.}
\]
As \claude{$m\to\infty$},
\[
\claude{(ms-1)e^{-ms}\longrightarrow-\mathbf 1_{\{0\}}(s),
\qquad
e^{-ms}\longrightarrow\mathbf 1_{\{0\}}(s),}
\]
and
\[
\claude{|(ms-1)e^{-ms}|\leq1,
\qquad
0\leq e^{-ms}\leq1.}
\]
Dominated convergence therefore gives
\[
-\alpha\widehat\nu(\{0\}\times\mathcal X)
-\widehat\nu_{\mathrm T}(\{0\}\times\mathcal X)
=0.
\]
Positivity of the measures implies
\begin{equation}\label{eq:s_zero_masses}
\widehat\nu(\{0\}\times\mathcal X)
=
\widehat\nu_{\mathrm T}(\{0\}\times\mathcal X)
=0.
\end{equation}

The inverse map
\[
g^{-1}(s,x)
=
\left(-\frac1\alpha\log s,x\right)
\]
is defined on $(0,1]\times\mathcal X$. In view of
\eqref{eq:s_zero_masses}, define
\[
\mu_0:=(g^{-1})_\#\widehat\mu_0,
\]
\[
\nu
:=
(g^{-1})_\#\left(
\left.\widehat\nu\right|_{(0,1]\times\mathcal X}
\right),
\qquad
\nu_{\mathrm T}
:=
(g^{-1})_\#\left(
\left.\widehat\nu_{\mathrm T}\right|_{(0,1]\times\mathcal X}
\right).
\]
These inverse images are again finite Radon measures.

It remains to check the discounted equation for every test function in
$\mathscr D_\alpha^t$. Let $\psi\in\mathscr D_\alpha^t$. Choose
$\rho\in C_c^1([0,\infty))$ such that
\[
0\leq\rho\leq1,
\qquad
\rho=1\ \text{on }[0,1],
\qquad
\rho=0\ \text{on }[2,\infty),
\]
and set
\[
\rho_R(t):=\rho(t/R),
\qquad
\psi_R(t,x):=\rho_R(t)\psi(t,x).
\]
for $R\geq1$.
Define
\[
\phi_R(s,x)
:=
\begin{cases}
\displaystyle
\psi_R\left(-\frac1\alpha\log s,x\right),&s>0,\\[1ex]
0,&s=0.
\end{cases}
\]
Because $\psi_R$ vanishes for sufficiently large $t$, the function $\phi_R$
vanishes in a neighborhood of $s=0$. Moreover,
\[
\mathcal L_\alpha^s\phi_R(s,x)
=
\mathcal L_\alpha^t\psi_R
\left(-\frac1\alpha\log s,x\right)
\qquad(s>0),
\]
so $\phi_R\in\mathscr D_\alpha^s$. Applying
\eqref{eq:discounted_clock_liouville} to $\phi_R$ and using the inverse
pushforwards shows that \eqref{eq:discounted_liouville} holds with
$\psi_R$ in place of $\psi$.

Finally,
\[
\mathcal L_\alpha^t\psi_R
=
\rho_R\mathcal L_\alpha^t\psi+\rho_R'\psi.
\]
The first term converges pointwise to $\mathcal L_\alpha^t\psi$ and is
uniformly bounded. The cutoff error satisfies
\[
\left|\int\rho_R'(t)\psi(t,x)\,d\nu(t,x)\right|
\leq
\frac{\|\rho'\|_\infty\|\psi\|_\infty}{R}\mass\nu
\longrightarrow0.
\]
Dominated convergence in the remaining terms therefore gives
\eqref{eq:discounted_liouville} for $\psi$. The two pushforward
constructions are mutually inverse because $g$ and $g^{-1}$ are inverse on
$[0,\infty)\times\mathcal X$ and $(0,1]\times\mathcal X$, respectively,
and the clock-space measures give no mass to $s=0$.
\end{proof}}

\chat{The following result follows from
Propositions~\ref{prop:discounted_equivalence} and
\ref{prop:discounted_clock_equivalence} together with
Theorem~\ref{thm:superposition}.

\begin{corollary}[Superposition principle for the discounted clock equation]
\label{cor:superposition_discounted_clock}
\chat{Suppose that the hypotheses of Theorem~\ref{thm:superposition}
hold.} Let $\alpha>0$ and let
\[
\mu_0\in\mathcal P(\{1\}\times\mathcal X_0),
\qquad
\nu,\nu_{\mathrm T}\in\mathcal M_+([0,1]\times\mathcal X)
\]
satisfy \eqref{eq:discounted_clock_liouville}. Then there exists
$\eta\in\mathcal P(\mathcal S_\infty)$ such that, for $\eta$-almost every
$(\tau,\gamma)$, the curve $\gamma$ is an admissible trajectory of
\eqref{general_system} with stopping time $\tau$. Moreover, for every
bounded Borel function $h:[0,1]\times\mathcal X\to\mathbb R$,
\[
\int_{[0,1]\times\mathcal X}h(s,x)\,d\nu(s,x)
=
\int_{\mathcal S_\infty}
\left(
\int_0^\tau e^{-\alpha t}
h(e^{-\alpha t},\gamma(t))\,dt
\right)d\eta(\tau,\gamma),
\]
where the inner integral is over $[0,\infty)$ if $\tau=\infty$, and
\[
\int_{[0,1]\times\mathcal X}h(s,x)\,d\nu_{\mathrm T}(s,x)
=
\int_{\{\tau<\infty\}}e^{-\alpha\tau}
h(e^{-\alpha\tau},\gamma(\tau))\,d\eta(\tau,\gamma).
\]
Finally,
\[
\mu_0=(\widetilde e_0)_\#\eta,
\qquad
\widetilde e_0(\tau,\gamma):=(1,\gamma(0)).
\]
\end{corollary}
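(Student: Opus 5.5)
The plan is to reduce the statement to Corollary~\ref{cor:superposition_discounted} by way of the inverse correspondence in Proposition~\ref{prop:discounted_clock_equivalence}, and then rewrite the resulting identities in the clock variable.

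\textbf{Step 1: pass from clock measures to discounted measures.} Given $(\mu_0,\nu,\nu_{\mathrm T})$ satisfying \eqref{eq:discounted_clock_liouville}, with $\mu_0\in\mathcal P(\{1\}\times\mathcal X_0)$, Proposition~\ref{prop:discounted_clock_equivalence} gives two things. First, $\nu$ and $\nu_{\mathrm T}$ charge no mass on $\{s=0\}$. Second, the triple
\[
(\check\mu_0,\check\nu,\check\nu_{\mathrm T}):=\bigl((g^{-1})_\#\mu_0,\,(g^{-1})_\#(\nu|_{(0,1]\times\mathcal X}),\,(g^{-1})_\#(\nu_{\mathrm T}|_{(0,1]\times\mathcal X})\bigr)
\]
solves \eqref{eq:discounted_liouville}. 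Because $g^{-1}(1,x)=(0,x)$, we have $\check\mu_0\in\mathcal P(\{0\}\times\mathcal X_0)$.

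\textbf{Step 2: apply the discounted superposition principle.} Corollary~\ref{cor:superposition_discounted} applies to this triple. It yields $\eta\in\mathcal P(\mathcal S_\infty)$ concentrated on admissible stopped trajectories, together with the representations of $\check\nu$ and $\check\nu_{\mathrm T}$ against bounded Borel functions of $(t,x)$, and $\check\mu_0=(e_0)_\#\eta$.

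\textbf{Step 3: rewrite the identities in the clock variable.} Take a bounded Borel $h$ on $[0,1]\times\mathcal X$. Since $\nu(\{0\}\times\mathcal X)=0$ and $\nu|_{(0,1]\times\mathcal X}=g_\#\check\nu$,
\[
\int h\,d\nu=\int h\circ g\,d\check\nu .
\]
Here $h\circ g(t,x)=h(e^{-\alpha t},x)$ is a bounded Borel function on $[0,\infty)\times\mathcal X$, because $g$ is continuous. Inserting it into the occupation identity of Corollary~\ref{cor:superposition_discounted} gives exactly
\[
\int_{\mathcal S_\infty}\int_0^\tau e^{-\alpha t}h(e^{-\alpha t},\gamma(t))\,dt\,d\eta .
\]
The terminal identity follows in the same way. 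Finally,
\[
\mu_0=g_\#\check\mu_0=(g\circ e_0)_\#\eta=(\widetilde e_0)_\#\eta,
\qquad g(0,x)=(1,x).
\]

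\textbf{Main obstacle.} The only delicate point is the slice $s=0$, which corresponds to $t=\infty$. Without the zero-mass property, part of $\nu$ could sit at $s=0$ and would not be captured by the pushforward through $g$. That property is already established in Proposition~\ref{prop:discounted_clock_equivalence}, so the remaining work is bookkeeping: checking Borel measurability and boundedness of $h\circ g$, and confirming that $\mathcal X_0$ and the admissibility notion are unchanged between the two formulations.
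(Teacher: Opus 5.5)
Your proposal is correct and follows the paper's own proof. You use the inverse correspondence of Proposition~\ref{prop:discounted_clock_equivalence}, which relies on the vanishing of mass on $\{s=0\}$, to obtain a solution of \eqref{eq:discounted_liouville}, then apply Corollary~\ref{cor:superposition_discounted} and push the resulting representations forward under $g(t,x)=(e^{-\alpha t},x)$, spelling out details the paper leaves implicit (that $h\circ g$ is bounded Borel and that $g(0,x)=(1,x)$ yields $\widetilde e_0=g\circ e_0$).
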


\begin{proof}
Apply Proposition~\ref{prop:discounted_clock_equivalence} to obtain a
solution of \eqref{eq:discounted_liouville}, and then apply
Corollary~\ref{cor:superposition_discounted}. Pushing its occupation and
terminal representations forward under
$g(t,x)=(e^{-\alpha t},x)$ gives the displayed formulas.
\end{proof}

\begin{remark}\label{rem:polynomial_test_functions}
If $\mathcal X$ is compact, then every function in
$C^1([0,1]\times\mathcal X)$ and its image under
$\mathcal L_\alpha^s$ are bounded. Hence
\[
\mathscr D_\alpha^s=C^1([0,1]\times\mathcal X).
\]
In particular, polynomial test functions are admissible in the discounted
clock equation \eqref{eq:discounted_clock_liouville}.
\end{remark}}

\section{Getting the extreme value}\label{sec:peak_recovery}
It is standard (see e.g. \cite{miller_peak_2024}), that for extreme value estimation problems constrained by the standard (nondiscounted) Liouville's equation \eqref{eq_liouville_standard} on finite time horizon $T$, one can use the (linear) cost functional
\[
\sup \int_{[0, T]\times \mathcal X}\|x\|^2d\mu_{\mathrm T}
\]
to obtain the extreme value. However, this is no longer the case when the problem is constrained by the discounted Liouville's equation \eqref{eq:discounted_clock_liouville}, which we aim to use, since the standard Liouville's equation $\eqref{eq_liouville_standard}$ is not well suited for the moment-SOS hierarchy because of the infinite time horizon. The reason is that the terminal measure $\nu_{\mathrm T}$ is discounted, meaning that its mass is (exponentially) decreasing with time and the mass of $\nu_{\mathrm T}$ scales the value of the extreme value $\int_{[0, \infty)\times \mathcal X}\|x\|^2d\nu_{\mathrm T}$. For this reason, we develop a different technique.

\subsection{Reachability testing}
\label{sec:bisection}
In this section, we consider the compactified polynomial vector field after the time reparametrization \eqref{eq:reparametrized_compact_system}. Hence, the trajectories for this system lie on the hemisphere \[\mathcal B = \Set{y = (z, w)\in \R^{n+1}}{\|y\|^2 = 1, w \geq 0}\] defined in Section \ref{sec_compactification}. Note that for the extreme value quantity it holds 
\begin{equation}\label{eq:peak_quantity_equivalence}
    \frac{\|x\|^2}{1+ \|x\|^2} = \|z\|^2,
\end{equation}
where $y = (z, w)$.

To recover the extreme value despite the discounting of the terminal measure, we adopt a bisection  approach. The key idea is to avoid using the value of the discounted terminal cost itself and instead determine whether trajectories can reach a given level set of the state norm.

Let $r\in (0, 1)$ and denote by
\[
\mathcal B_r := \{y = (z, w)\in\mathbb R^{n+1} \mid \|z\|^2 \le r\} \cap\mathcal B
\]
the closed ball of squared radius $r$, and by $\overline{\mathcal B\setminus \mathcal B_r}$ its closed complement inside the compactified state space $\mathcal B$. We interpret $r$ as a candidate value for the extreme value.

Using the discounted Liouville's equation with the artificial clock variable
\eqref{eq:discounted_clock_liouville}, we ask whether there exists an admissible trajectory starting from $\mathcal Y_0 \subset \mathcal B$ whose state stays in $\mathcal B_r$ until it reaches the set $\overline{\mathcal B\setminus \mathcal B_r}$ at some time. In the measure-theoretic formulation, this corresponds to the existence of measures
\begin{align}
\label{eq:support_compact_bisection}
    \nu \in \mathcal M_+([0,1]\times \mathcal B_r), 
\qquad
\nu_{\mathrm T} \in \mathcal M_+([0,1]\times \overline{\mathcal B\setminus \mathcal B_r}),
\qquad
\mu_0 \in \mathcal M_+(\{1\}\times \mathcal Y_0),
\end{align}

satisfying the discounted clock Liouville's equation \eqref{eq:discounted_clock_liouville}. Observe that we restricted the supports of measures $\nu$ and $\nu_{\mathrm T}$. Condition $\nu \in \mathcal M_+([0,1]\times \mathcal B_r)$ enforces that the occupation measure only captures trajectories whose squared norm does not exceed $r$, while $\nu_{\mathrm T} \in \mathcal M_+([0,1]\times \overline{\mathcal B\setminus \mathcal B_r})$ assures the terminal measure is supported on states whose squared norm is at least $r$. \chat{Before imposing a normalization on $\mu_0$, the all-zero triple is always feasible, so feasibility with $\nu_{\mathrm T}=0$ has no trajectory-level meaning. After imposing $\mass\mu_0=1$ as in \eqref{eq:feas_for_bisection_compact}, any feasible solution with $\mass\nu_{\mathrm T}>0$ represents an ensemble containing at least one trajectory that reaches the level $\|z\|^2=r$ or exceeds it at a finite time.}

Hence, we would like to maximize $\mass \nu_{\mathrm T}$ to see if the problem is feasible with $\nu_{\mathrm T}$ of positive mass. The formal statement of the problem is
\begin{align}\label{eq:feas_for_bisection_compact}
    \beta(r) := &\sup_{\mu_0, \nu, \nu_{\mathrm T}} \mass \nu_{\mathrm T}\\
    \text{s.t.: }& \int_{[0,1]\times\mathcal B}
\left(
-\alpha s\,\frac{\partial \phi(s,y)}{\partial s}
+ \nabla_y\phi(s,y)\cdot f_{\text{p}}(y)
- \alpha \phi(s,y)
\right)
d\nu(s,y)\notag \\
&+ \int_{\{1\}\times\mathcal Y_0} \phi(s,y)\,d\mu_0(s,y)
- \int_{[0,1]\times\mathcal B} \phi(s,y)\,d\nu_{\mathrm T}(s,y)
= 0,\notag \\
&\quad \chat{\text{for all }\phi\in\mathscr D_\alpha^s,}\notag\\
&     \nu \in \mathcal M_+([0,1]\times \mathcal B_r), 
\quad
\nu_{\mathrm T} \in \mathcal M_+([0,1]\times \overline{\mathcal B\setminus \mathcal B_r}),
\quad
\mu_0 \in \mathcal M_+(\{1\}\times \mathcal Y_0),\notag \\&\quad \mass\mu_0 = 1.\notag
\end{align}

\mk{If the optimal value of \eqref{eq:feas_for_bisection_compact} is zero, then no admissible
trajectory attains $\|z(\tau)\|^2 \ge r$ at any finite time $\tau$. Hence
$\|z(\tau)\|^2 < r$ along every admissible trajectory, and therefore $r$ is an
upper bound on the extreme value, i.e.
\[
  \sup_{\tau \ge 0} \|z(\tau)\|^2 \le r .
\]
Note that this bound need not be strict: the supremum may equal $r$ when the
level $r$ is approached only asymptotically as $\tau \to \infty$ rather than
attained at a finite time.}


This leads to the following bisection procedure. Let $[\underline r,\overline r]$ be an interval known to contain the true extreme value. The lower bound for the bisection $\underline r$ can be determined as any reachable $r$ by the trajectories, that is, given any point $y_0 = (z_0, w_0)\in \mathcal Y_0$, we can choose $\underline r = \|z_0\|^2$ and the upper bound can be selected as $\overline r = 1$ because of the compactification. At each iteration, we test whether the optimal value $\beta(r)$ of \eqref{eq:feas_for_bisection_compact} is strictly positive for $r=(r_{\min}+r_{\max})/2$. If $\beta(r) > 0$, we set $r_{\min}\leftarrow r$; otherwise, we set $r_{\max}\leftarrow r$. Repeating this procedure yields convergence of $r_{\min}$ and $r_{\max}$ to the true extreme value. This convergence is more precisely described by the following result.

\begin{proposition}[Convergence of bisection to the extreme value]
\label{prop:bisection_convergence}
\chat{Fix $\alpha>0$ and let $\beta$ be defined by
\eqref{eq:feas_for_bisection_compact}. Assume
\[
0\leq\underline r<\overline r\leq1,
\qquad
\beta(\underline r)>0,
\qquad
\beta(\overline r)=0.
\]
Starting from
$(r_{\min}^0,r_{\max}^0)=(\underline r,\overline r)$, perform the exact
bisection update described above, and let
\[
r^\star := \sup \Set{r \in [\underline r, \overline r)}{\beta(r) > 0}.
\]
Then
\[
r_{\min}^j\uparrow r^\star,
\qquad
r_{\max}^j\downarrow r^\star.
\]
Moreover, $r^\star$ is the true extreme value and every $r_{\max}^j$ is
an upper bound:}
\[
\chat{\sup_{t\ge 0}\|z(t)\|^2 \le r_{\max}^j
\qquad\text{for every admissible trajectory }
y(\cdot)=(z(\cdot),w(\cdot))\text{ and every }j.}
\]
\end{proposition}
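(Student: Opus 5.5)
The plan is to first establish two facts about $\beta$: it is monotone in $r$, and $\beta(r)>0$ holds exactly when the level $r$ is reached at a finite time. Once these are in hand, the bisection convergence is elementary.

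\emph{Characterization of $\beta(r)>0$.} Suppose an admissible trajectory $y(\cdot)$ of \eqref{eq:reparametrized_compact_system} with $y(0)\in\mathcal Y_0$ satisfies $\|z(\tau)\|^2\ge r$ for some finite $\tau$. Let $\tau_r$ be the first such time; it exists because $\|z(\cdot)\|^2$ is continuous. Take $\mu_0=\delta_{(1,y(0))}$, let $\nu$ be the discounted occupation measure of $y$ on $[0,\tau_r)$ pushed forward by $t\mapsto(e^{-\alpha t},y(t))$, and set $\nu_{\mathrm T}=e^{-\alpha\tau_r}\delta_{(e^{-\alpha\tau_r},y(\tau_r))}$.
\begin{itemize}
\item Along $[0,\tau_r)$ we have $\|z\|^2<r$, so $\nu$ is supported in $[0,1]\times\mathcal B_r$.
\item $y(\tau_r)$ lies in $\overline{\mathcal B\setminus\mathcal B_r}$.
\item The triple satisfies \eqref{eq:discounted_clock_liouville}; integrate $\frac{d}{dt}(e^{-\alpha t}\phi(e^{-\alpha t},y(t)))$ exactly as in the proof of Proposition~\ref{prop:discounted_equivalence} and the first half of Proposition~\ref{prop:discounted_clock_equivalence}. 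All test functions are admissible by Remark~\ref{rem:polynomial_test_functions}, since $\mathcal B$ is compact.
\end{itemize}
Hence $\beta(r)\ge e^{-\alpha\tau_r}>0$.

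Conversely, suppose $\beta(r)>0$ and take a feasible triple with $\mass\nu_{\mathrm T}>0$. Apply Corollary~\ref{cor:superposition_discounted_clock} with $\mathcal X=\mathcal B_r\cup\overline{\mathcal B\setminus\mathcal B_r}=\mathcal B$. Here $f_{\mathrm p}$ is polynomial, hence globally Lipschitz on the compact $\mathcal B$. This gives $\eta$ with $\mass\nu_{\mathrm T}=\int_{\{\tau<\infty\}}e^{-\alpha\tau}d\eta>0$, so $\eta(\{\tau<\infty\})>0$. The terminal representation with $h=\mathbf 1_{\mathcal B_r^{\circ}}$ (relative interior) shows that $\eta$-a.e. finite-stopped curve has $\gamma(\tau)\in\overline{\mathcal B\setminus\mathcal B_r}$, i.e. $\|z(\tau)\|^2\ge r$. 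One admissible trajectory with this property is all we need. I do not even need the occupation support for this direction.

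\emph{Monotonicity.} By the characterization, $\beta(r)>0$ iff $R:=\sup_{y(0)\in\mathcal Y_0,\,t<\infty}\|z(t)\|^2$ is at least $r$ and is attained at a finite time when it equals $r$. Hence $\{r:\beta(r)>0\}$ is an interval $[0,R)$ or $[0,R]$. This gives $r^\star=R$ whenever $R\in[\underline r,\overline r]$; it is, since $\beta(\underline r)>0$ forces $R\ge\underline r$ and $\beta(\overline r)=0$ forces $R\le\overline r$. Therefore $r^\star$ equals the true extreme value. Because of \eqref{eq:peak_quantity_equivalence} and the topological equivalence with the original system, this is the extreme value of the original problem.

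\emph{Bisection.} The invariant is maintained inductively: $\beta(r_{\min}^j)>0$ and $\beta(r_{\max}^j)=0$. By monotonicity, $r_{\min}^j\le r^\star\le r_{\max}^j$. The sequence $r_{\min}^j$ is nondecreasing, $r_{\max}^j$ is nonincreasing, and $r_{\max}^j-r_{\min}^j=2^{-j}(\overline r-\underline r)\to0$, so both converge to $r^\star$. Finally, $\beta(r_{\max}^j)=0$ together with the characterization gives $\|z(t)\|^2<r_{\max}^j$ for all finite $t$ and all admissible trajectories, which yields the stated upper bound.

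The main obstacle is the converse direction of the characterization. The superposition corollary must apply with the support constraints, and the terminal support must be transferred to individual curves. I would handle this by integrating indicator functions against $\nu_{\mathrm T}$, where the weight $e^{-\alpha\tau}>0$ ensures that null sets are preserved. I also need to check the edge case where $R$ is only approached asymptotically, so that $\beta(R)=0$. This causes no problem, because the definition of $r^\star$ as a supremum handles it.
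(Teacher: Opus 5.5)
Your proposal is correct and follows essentially the same route as the paper. You characterize $\beta(r)>0$ as finite-time reachability of the level $\|z\|^2\ge r$: one direction uses Corollary~\ref{cor:superposition_discounted_clock}, and the other uses the discounted measures of the trajectory stopped at its first hitting time. You then run the same bisection-invariant argument, and your explicit down-set description of $\{r:\beta(r)>0\}$ and the indicator-function transfer of the terminal support to $\eta$-almost every stopped curve are more careful versions of steps the paper states briefly.
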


\begin{proof}
\chat{Fix $r\in(0,1)$} and denote by $\beta(r)$ the optimal value of \eqref{eq:feas_for_bisection_compact}.

First, we show that if $\beta(r)>0$, then the level $r$ is reachable. Assume $\beta(r)>0$. Then there exists a feasible triplet $(\nu,\nu_{\mathrm T},\mu_0)$ in
\eqref{eq:feas_for_bisection_compact} such that $\mass\,\nu_{\mathrm T}>0$ and
\[
\mathrm{spt} \nu \subset [0,1]\times \mathcal B_r,\qquad \mathrm{spt} \nu_{\mathrm T} \subset [0,1]\times \overline{\mathcal B\setminus \mathcal B_r}.
\]
By Corollary \ref{cor:superposition_discounted_clock}, $(\nu,\nu_{\mathrm T},\mu_0)$ admits a representation by an ensemble of admissible trajectories $y(\cdot)$ of the compactified system \chat{with finite or infinite stopping times}. The representation is such that $\nu$ is the occupation measure and $\nu_{\mathrm T}$ the terminal measure. Since $\mass\,\nu_{\mathrm T}>0$ and $\mathrm{spt} \nu_{\mathrm T} \subset [0,1]\times \overline{\mathcal B\setminus \mathcal B_r}$, there must exist
(at least one) trajectory in the ensemble whose terminal state lies in $\overline{\mathcal B\setminus \mathcal B_r}$. For this trajectory we have $\|z(\tau)\|^2\ge r$ at the stopping time, hence $r$ is reachable.

\ks{Conversely, suppose that an admissible trajectory reaches $\overline{\mathcal B\setminus \mathcal B_r}$ at a finite time. Stop the trajectory at its first hitting time $\tau$. Its discounted occupation measure is supported on $[0,1]\times\mathcal B_r$, its terminal measure is supported on $[0,1]\times\overline{\mathcal B\setminus \mathcal B_r}$, with
\(
\operatorname{mass}\nu_T=e^{-\alpha\tau}>0.
\)
Therefore $\beta(r)>0$.}

Now, if $r$ is not reachable by any admissible trajectory, then $\beta(r)=0$. Evidently, if $\beta(r)=0$, then no admissible trajectory can satisfy $\|z(\tau)\|^2\ge r$ at any finite time. Hence
\[
\sup_{\tau\ge 0}\|z(\tau)\|^2 \le r
\qquad\text{for all admissible trajectories }\chat{y(\cdot)},
\]
so $r$ is an upper bound on the true extreme value.
\ks{Consequently, \(r^\star\) coincides with the true extreme value,
\[
r^\star
=
\sup\left\{
\|z(t)\|^2
\;\middle|\;
y(\cdot)=(z(\cdot),w(\cdot)) \text{ is admissible},\ t\geq 0
\right\},
\]
since every level strictly below the right-hand side is reached at a finite time, while every level reached at a finite time is bounded by the right-hand side.}

By construction, the bisection procedure produces a nonincreasing sequence \chat{$(r_{\max}^j)_j$} such that \chat{$r_{\max}^j$} is an upper bound for every \chat{$j$}. This yields
\[
\sup_{\tau\ge 0}\|z(\tau)\|^2 \le \chat{r_{\max}^j}
\qquad\text{for all admissible trajectories }\chat{y(\cdot)},\ \text{for all }\chat{j}.
\]

Finally, by construction of the bisection updates we have \chat{$r_{\min}^j\le r_{\max}^j$ for every $j$} and
\[
\chat{r_{\max}^{j+1}-r_{\min}^{j+1}=\tfrac12\,(r_{\max}^j-r_{\min}^j)},
\]
hence \chat{$r_{\max}^j-r_{\min}^j=2^{-j}(r_{\max}^0-r_{\min}^0)\to0$}. Moreover, the update rule preserves the invariant \chat{$\beta(r_{\min}^j)>0$}, and \chat{$r_{\max}^j$} is an upper bound for all \chat{$j$}, so \chat{$r_{\min}^j\le r^\star\le r_{\max}^j$} for all \chat{$j$} by definition of $r^\star$. Taking limits gives \chat{$r_{\min}^j\uparrow r^\star$ and $r_{\max}^j\downarrow r^\star$}.

\end{proof}
\ks{A key feature of the proposed formulation is that the extreme value
need not be attained at a finite time. If it is approached only
asymptotically, it is recovered as the supremum of the lower levels
reached at finite times.}

\subsection{Numerical solution of the reachability problem}\label{sec:numerics}

The optimization problem \eqref{eq:feas_for_bisection_compact} is an infinite-dimensional linear program over nonnegative Radon measures supported on compact semialgebraic sets. To obtain a numerically tractable approximation, we employ the moment-SOS hierarchy and construct a hierarchy of finite-dimensional semidefinite relaxations.

Since all measures in \eqref{eq:feas_for_bisection_compact} are supported on compact semialgebraic sets (and we make them Archimedean by adding a redundant ball constraint if necessary), they admit moment representations, and the moment–SOS hierarchy based on Putinar’s Positivstellensatz \cite{putinar_positive_1993} provides a sequence of finite-dimensional relaxations. \chat{For a given relaxation order \claude{$k\in\mathbb N$}, we truncate these moment sequences to moments of total degree at most \claude{$2k$}, denoted respectively by}
\[
\mathbf y^\nu, \quad \mathbf y^{\nu_{\mathrm T}}, \quad \mathbf y^{\mu_0}.
\]

\chat{The discounted clock Liouville equation
\eqref{eq:discounted_clock_liouville} is enforced weakly against polynomial
test functions. \mk{The order-$k$ relaxation uses all monomial test functions such that all polynomials appearing in \eqref{eq:discounted_clock_liouville} are of degree at most $2k$.}


Remark~\ref{rem:polynomial_test_functions}
justifies the use of polynomial tests. The resulting identities are linear
constraints on the truncated moment sequences. The support constraints are
imposed through the corresponding moment and localizing matrices. In
particular, with
\[
h_{\mathcal B}(y):=1-\|y\|^2,
\]
the sphere equality is imposed, for each relevant moment sequence
$\mathbf y^\sigma$, by the localizing equalities
\[
\claude{M_{k-1}(h_{\mathcal B}\,\mathbf y^\sigma)=0,}
\qquad
\sigma\in\{\nu,\nu_{\mathrm T},\mu_0\}.
\]
The remaining polynomial inequalities defining $\mathcal B_r$,
$\overline{\mathcal B\setminus\mathcal B_r}$, and $\mathcal Y_0$ give the
usual positive-semidefinite localizing constraints. For their explicit
construction, see \cite{lasserre_positive_2010}. The normalization
$\mass\mu_0=1$ fixes the zeroth-order moment of $\mu_0$.}

\chat{At \claude{relaxation order $k$}, this construction yields a
finite-dimensional SDP with exact optimal value \claude{$\beta_k(r)$}. One has
\[
\claude{\beta_k(r)\geq\beta(r)\geq0,}
\]
and \claude{$(\beta_k(r))_{k\in\mathbb N}$} is nonincreasing\claude{, since the
test functions and the associated moment and localizing
constraints are nested in those of order $k+1$}. Consequently,
the exact identity \claude{$\beta_k(r)=0$} implies $\beta(r)=0$ and hence proves
that the level $r$ is unreachable.}

\chat{A floating-point solver, however, returns only an approximation of
\claude{$\beta_k(r)$}. A computed zero, or a value below some numerical threshold,
does not by itself establish the exact identity \claude{$\beta_k(r)=0$}.
Algorithm~\ref{alg:peak_bisection_short} therefore produces numerical
candidate bounds unless the zero decision is accompanied by a rigorously
verified dual SOS certificate, for example by rational reconstruction or
interval verification. Reusing the output at order \claude{$k$} as the upper end
of the bisection interval at order \claude{$k+1$} is a useful numerical strategy.}

\begin{algorithm}[H]
\caption{Extreme value estimation by reachability testing}
\label{alg:peak_bisection_short}
\begin{algorithmic}[1]
\Require Dynamics $\dot y=f_p(y)$, initial set $\mathcal Y_0$, $\alpha>0$, initial relaxation order \claude{$k_0$}, initial interval $[\underline r,\overline r]$, $\varepsilon>0$, $\delta \geq 0$
\Ensure \chat{Bound} $r_{\max}$ on the extreme value
\State \claude{$k \gets k_0$}
\State $r_{\text{max}}\gets \overline r$
\While{the relaxation of order \claude{$k$} is computationally tractable}
\State $r_{\text{min}}\gets \underline r$
    \While{$r_{\max}-r_{\min}>\varepsilon$}
        \State $r \gets (r_{\min}+r_{\max})/2$
        \State \parbox[t]{0.8\linewidth}{\claude{$\beta_k(r)$} $\gets$ solution of the order-\claude{$k$} relaxation of \eqref{eq:feas_for_bisection_compact} with parameter $r$, dynamics $f_p$, initial set $\mathcal Y_0$ and discount factor $\alpha$}
        \If {\claude{$\beta_k(r)$} $> \delta$}
            \State $r_{\min}\gets r$
        \Else
            \State $r_{\max}\gets r$
        \EndIf
    \EndWhile
    \State \claude{$k \gets k+1$}
\EndWhile

\State \Return $r_{\max}$
\end{algorithmic}
\end{algorithm}

\begin{remark}[Numerical detection of reachability]
\chat{The theoretical implication
\[
\claude{\beta_k(r)=0}\quad\Longrightarrow\quad\beta(r)=0
\]
concerns exact optimal values. In floating-point computation, both a
comparison with $\delta>0$ and a literal comparison with $\delta=0$ remain
numerical tests and need not prove exact vanishing. Accordingly, unverified
solver outputs are reported below as numerical estimates. The term
``certified upper bound'' is reserved for a level supported by a rigorously
verified dual certificate. Small negative solver outputs are treated only as
numerical artifacts, consistently with the exact nonnegativity of the
primal objective.}
\end{remark}

\chat{All computations reported in this paper use a positive threshold
$\delta$ and are therefore described as numerical estimates rather than
certified upper bounds.}

Finally, we remark that once a finite bound is obtained in the compactified variables, the subsequent bisection procedure can be carried out directly in the original state coordinates, that is, the system $\dot x = f(x), x(0)\in \mathcal X_0$. Indeed, suppose that for a given relaxation order \claude{$k$} the moment relaxation \chat{establishes, exactly or through a verified certificate,} that all admissible trajectories remain in a compact subset of the compactified state space $\mathcal B_r$ for some $r < 1$. Based on the expression \eqref{eq:peak_quantity_equivalence}, we have the relation $\|x\|^2 \le \frac{r}{1-r}$, which immediately yields a bound on the norm of the original state variable, so that the reachable set of the original system is contained in a compact subset of $\mathbb R^n$. In this situation, the extreme value estimation problem can be reformulated directly in the original coordinates without compactification, using the discounted clock Liouville's equation posed on a compact state space.

This observation allows one to restrict the use of compactification to a preliminary step whose sole purpose is to obtain a non-sharp bound on the true extreme value. Once a finite bound is established, the bisection-based reachability test can be performed in the original coordinates, avoiding the highly nonlinear change of variables (and the additional variable) induced by compactification. This typically leads to better numerical conditioning and improved performance of the moment-SOS hierarchy.

\subsection{Getting an extreme value of a general polynomial $p(x)$}
\label{sec:peak_general_polynomial}

So far, we have focused on recovering the maximum value of the squared norm $\|x\|^2$. However, the same reachability-based methodology applies to the extreme value estimation of a general polynomial observable
\[
p:\mathbb R^n \to \mathbb R,
\qquad p \in \mathbb R[x].
\]
Once a finite state bound has been established, extending the method to arbitrary polynomial observables requires only replacing the level sets of the norm by level sets of $p(x)$. Our goal is to compute
\[
p^\star := \chat{\sup\left\{p(x(t)):\ x(\cdot)\text{ is admissible},\ t\geq0\right\}}
\]
over all admissible trajectories of the original system.

Suppose that by the compactified reachability procedure described above we have obtained a bound
\[
\sup_{t \ge 0} \|x(t)\| \le R
\]
for all admissible trajectories, for some finite $R>0$. Then the reachable set of the original system is contained in the compact ball $\mathcal X_R = \Set{x \in \R^n}{\|x\|\leq R} \subset \mathbb R^n$, and $p(x)$ admits finite bounds on this set that can be computed explicitly.

Let
\[
p(x)=\sum_{\alpha\in\mathbb N^n} c_\alpha x^\alpha
\]
be the monomial expansion of $p$, and denote by $d_p$ the degree of $p(x)$. For all $x\in \mathcal X_R$, we have the estimate
\[
|x^\alpha| \le \|x\|^{|\alpha|} \le R^{|\alpha|},
\]
which yields the explicit uniform bound
\[
|p(x)| \le \sum_{\alpha} |c_\alpha| R^{|\alpha|}.
\]
Consequently, a valid initialization interval for the bisection is given by
\[
\underline r := -\sum_{\alpha} |c_\alpha| R^{|\alpha|},
\qquad
\overline r := \sum_{\alpha} |c_\alpha| R^{|\alpha|},
\]
which satisfies
\[
\underline r \le p(x) \le \overline r
\qquad \text{for all } x \in \mathcal X_R.
\]
These bounds are generally non-sharp, but they are explicit, and sufficient to initialize the bisection procedure.

As in Section~\ref{sec:bisection}, we avoid maximizing the discounted terminal cost directly and instead formulate extreme value estimation as a sequence of reachability tests. For a candidate level $r \in \mathbb R$, we ask whether there exists an admissible trajectory such that
\[
p(x(t)) \ge r
\]
at some time. This is implemented by imposing the support constraints
\[
\mathrm{spt}\,\nu_{\mathrm T} \subset \{x \in \mathcal X_R \mid p(x) \ge r\}, \qquad \mathrm{spt} \nu \subset \{x \in \mathcal X_R \mid p(x) \le r\}.
\]

As before, if the corresponding discounted clock Liouville's problem admits a feasible solution with $\mass\,\nu_{\mathrm T}>0$, then the level $r$ is reachable by at least one admissible trajectory; if the optimal value is zero, then $r$ is an upper bound on the extreme value of $p(x)$. This leads naturally to a bisection procedure on the inequality $p(x)\le r$.

The bisection then proceeds exactly as in Section~\ref{sec:bisection}: at each step, we test reachability of the superlevel set $\{x \in \mathcal X_R\mid p(x)\ge r\}$ for the midpoint $r=(r_{\max}+r_{\min})/2$ using a moment-SOS relaxation of the corresponding measure LP. We initialize the bisection with $r_{\min} = \underline r$, $r_{\max} = \overline r$.

In summary, once a finite bound on the state has been established, the proposed reachability-based framework enables us to recover extreme values of arbitrary polynomial observables $p(x)$ using only explicit norm-based bounds and without introducing any additional optimization layer.

\section{Examples}\label{sec:examples}
In this section, we illustrate the developed methodology in a few examples. First, we show that we are able to find an upper bound for the extreme value occurring asymptotically as time goes to infinity. This will be shown on the Van der Pol oscillator \cite{khalil}. Second, we show that when a trajectory from the feasible set blows up, the upper bound never drops below 1 (which corresponds to infinity) for all relaxations we are able to solve. In the same example, we also show the computation of upper bounds for finite extreme value attained at finite time. We use \texttt{GloptiPoly} \cite{henrion_gloptipoly_2009} to model the measure relaxations. We then use \texttt{Mosek} \cite{mosek} solver to solve the relaxations numerically.

\begin{example}[Van der Pol oscillator]\label{ex:vdp}
    
Van der Pol oscillator is described by the following equations
\begin{align}
    \dot x_1(t) &= x_2\\
    \dot x_2(t) &= -x_1 + (1-x_1^2)x_2.
\end{align}
We are interested in finding the extreme value of the Euclidean norm of the state for the initial condition $x(0) = (1/2, 1/2)$. The compactified dynamics after the time reparametrization corresponding to system \eqref{z_der_poly}-\eqref{w_der_poly} is 
\begin{align*}
\dot z_1(\tau) &= z_2(-z_2w^2z_1 + w^2 + z_2z_1^3)\\
\dot z_2(\tau) &= - w^2z_1 - w^2z_2^3 + w^2z_2 + z_1^2z_2^3 - z_1^2z_2\\
\dot w(\tau) &= -wz_2^2(w^2 - z_1^2).
\end{align*}
This vector field corresponds to $f_{\text p}$ from Algorithm \ref{alg:peak_bisection_short}. We choose bisection tolerance $\varepsilon = 10^{-3}$ and zero detection tolerance $\delta = 5\cdot 10^{-7}$. The initial set in the compactified variables described in Section \ref{sec_compactification} is \[\mathcal Y_0 = \Set{(z_1, z_2, w) \in \mathcal B}{z_1 - w/2 = 0, z_2 - w/2 = 0, z_1^2 + z_2^2 \leq w^2/2}.\] The last inequality comes from the homogenization of the ball constraint $x_1^2 + x_2^2 \leq 1/2$ in Assumption \ref{ass:ball_constraint}.

In accordance with Section \ref{sec:bisection}, we select the initial interval for the bisection as $[\underline r, \overline r] = [1/3, 1]$, because we can take $\underline r = \|z_0\|^2 = \frac{\|x_0\|^2}{\|x_0\|^2 + 1}$ for any point $y = (z, w) \in \mathcal Y_0$ and $\|x_0\| = \frac{\sqrt 2}{2}$. We take discount factor $\alpha = 1$, $\varepsilon = 10^{-3}$, $\delta = 5\cdot 10^{-7}$. Now we can use Algorithm \ref{alg:peak_bisection_short}. The computed upper bounds $r_{\text{max}}$ from the outer loop of the Algorithm \ref{alg:peak_bisection_short} are shown in Table \ref{tab:vdp_comp}. \mk{The value $k$ is the relaxation order, in the sense of Section~\ref{sec:numerics}}. The number is $r_k$ shows the computed upper bound for $r^{\star}$ in the compactified variables, $p_k$ is the upper bound for the Euclidean norm in the original coordinates computed by relation \eqref{eq:peak_quantity_equivalence} from $r_k$ and $p_k'$ denotes the computation of the upper bound in the original coordinates for relaxations after an upper bound was computed in degree 8. We can see that the upper bounds obtained by computations in original coordinates are sharper. This is caused by the fact that compactification is a highly nonlinear change of variables, as discussed in Section \ref{sec:numerics}. Note that we can refine the a priori bound on the state norm after we obtain a new (tighter) bound each relaxation. 


\begin{table}[h]
\centering
\caption{Numerical upper bounds for the Van der Pol oscillator for different relaxation orders $k$. The number $r_k$ is the upper bound for $r^{\star}$ in the compactified variables, $p_k$ is the upper bound in the original coordinates from relation \eqref{eq:peak_quantity_equivalence}, and $p_k'$ is the computation of the upper bound in the original coordinates for relaxations (with bisection tolerance $10^{-3}$) after an upper bound was computed with $k = 8$.}
\begin{tabular}{c|c|c|c} \label{tab:vdp_comp}
     $k$ & $r_k$ & $p_k$ & $p_k'$\\
     \hline
     2 & 1 & $\infty$ & -\\
     3 & 1 & $\infty$ & -\\
     4 & 0.999 & 39.179 & 4.877\\
     5 & 0.980 & 6.971 & 3.160\\
     6 & 0.967 & 5.377 & 2.926\\
\end{tabular}
\end{table}
By simulating the system with the adaptive Runge-Kutta method \texttt{ode45} in \texttt{MATLAB}, we obtained the (lower bound for the) extreme value 2.8326. Since the initial point $(1/2, 1/2)$ lies inside the limit cycle, the extreme value coincides with the maximal norm of the limit cycle (see e.g. \cite{khalil}). Hence, the supremum is achieved asymptotically as $t \to \infty$ and is not attained at any finite time, which highlights the necessity of the proposed infinite-horizon formulation.
\end{example}

\begin{example}[Finite time blow-up]
In this example, we consider the decoupled system of ODEs
\begin{align}
    \dot x_1(t) &= -x_1 \\
    \dot x_2(t) &= x_2^2.
\end{align}
We are interested in estimating the maximal Euclidean norm of the solution of this system with initial condition $x_0 \in \mathcal X_0 = \{0\} \times [0, 1]$. The compactified dynamics after time reparametrization (corresponding to \eqref{z_der_poly} - \eqref{w_der_poly} and $f_{\text p}$ of Algorithm \ref{alg:peak_bisection_short}) is 
\begin{align*}
    \dot z_1(\tau) &= -z_1(-wz_1^2 + z_2^3 + w)\\
    \dot z_2(\tau) &= z_2(wz_1^2 - z_2^3 + z_2)\\
    \dot w(\tau) &= w(wz_1^2 - z_2^3).
\end{align*}
We know that for any point from the initial set $(0, x_{20})$ with $x_{20} > 0$, there is a finite time blow-up. Hence, we expect the proposed method to never drop the upper bound below 1 in the compactified coordinates as $r^{\star} = 1$. With $\alpha = 1$, $\varepsilon = 10^{-3}$, $\delta = 5\cdot 10^{-7}$, \[\mathcal Y_0 = \Set{(z, w)\in \mathcal B}{z_1 = 0, z_2 \geq 0, -z_2 + w \geq 0, z_1^2 + z_2^2 - w^2 \leq 0},\] and $[\underline r, \overline r] = [1/2, 1]$, where $\underline r$ was chosen using the point $(0, 1) \in \mathcal X_0$ by formula \eqref{eq:peak_quantity_equivalence}, the Algorithm \ref{alg:peak_bisection_short} shows precisely this expected behaviour. The last inequality in the definition of the set $\mathcal Y_0$ again comes from Assumption \ref{ass:ball_constraint}. The upper bounds provided by the outer loop of Algorithm \ref{alg:peak_bisection_short} are written in Table \ref{tab:blow_up}, where $k$ \mk{denotes the relaxation order as defined in  Section~\ref{sec:numerics}}, $r_k$ is the computed upper bound for $r^{\star}$ in the compactified variables, and $p_k$ is the upper bound for the Euclidean norm in the original coordinates (by relation \eqref{eq:peak_quantity_equivalence} from $r_k$). We emphasize that the upper bound never dropping for finite $k$ is not a certificate of blow-up. 
\begin{table}[h]
\centering
\caption{Numerical upper bounds for finite time blow-up problem. Here $k$ is the relaxation order and $r_k$ is the upper bound for $r^{\star}$ in the compactified variables, and $p_k$ is the upper bound in the original coordinates from relation \eqref{eq:peak_quantity_equivalence}.}
\begin{tabular}{c|c|c} \label{tab:blow_up}
     $k$ & $r_k$ & $p_k$ \\
     \hline
     2 & 1 & $\infty$ \\
     3 & 1 & $\infty$ \\
     4 & 1& $\infty$ \\
     5 & 1 & ${\infty}$ \\
     6 & 1 & $\infty$ \\
\end{tabular}
\end{table}

In order to show the versatility of the method, we estimate the extreme value of the same system but starting with initial condition $x_0 \in \mathcal X_0 = [0, 1]\times \{0\}$. Thus, the state $x_2$ that blew up in the previous experiment is set to 0 and the state $x_1$ exponentially decays to 0 from any initial condition. Hence, the extreme value of this system is attained at time $0$ and it is $\max_{x_{10} \in [0, 1]}\|(x_{10}, 0)\| = 1$. According to formula \eqref{eq:peak_quantity_equivalence}, this corresponds to extreme value $r^{\star} = 1/2$ in the compactified coordinates. We use 
$\alpha = 1$, $\varepsilon = 10^{-3}$, $\delta = 5\cdot 10^{-7}$, \[\mathcal Y_0 = \Set{(z, w)\in \mathcal B}{z_2 = 0, z_1 \geq 0, -z_1 + w \geq 0, z_1^2 + z_2^2 - w^2 \leq 0},\] and $[\underline r, \overline r] = [1/3, 1]$ for Algorithm \ref{alg:peak_bisection_short}. The last inequality in the definition of the set $\mathcal Y_0$ comes from the ball constraint in Assumption \ref{ass:ball_constraint}. The values of the upper bounds $r_{\text{max}}$ from the outer loop of Algorithm \ref{alg:peak_bisection_short} are shown in Table \ref{tab:blow_up_finite}. As in the previous examples, \mk{$k$ is the relaxation order and $r_k$ is the upper bound for $r^{\star}$ in the compactified variables}, $p_k$ is the upper bound for the Euclidean norm in the original coordinates from relation \eqref{eq:peak_quantity_equivalence}, and $p_k'$ is the computation of the upper bound in the original coordinates for relaxations after an upper bound was computed with $k = 8$.
\begin{table}[h]
\centering
\caption{Numerical upper bounds for finite extreme value in the blow-up problem. Here $k$ is the relaxation order and $r_k$ is the upper bound for $r^{\star}$ in the compactified variables, $p_k$ is the upper bound in the original coordinates from relation \eqref{eq:peak_quantity_equivalence}, and $p_k'$ is the computation of the upper bound in the original coordinates for relaxations after an upper bound was computed with $k = 8$.}
\begin{tabular}{c|c|c|c} \label{tab:blow_up_finite}
     $k$ & $r_k$ & $p_k$ & $p_k'$\\
     \hline
     2 & 1 & $\infty$ & -\\
     3 & 1 & $\infty$ & -\\
     4 & 0.992 & 11.269 & 3.688\\
     5 & 0.986 & 8.523 & 2.275\\
     6 & 0.979 & 6.798 &1.734\\
     7 & - & - & 1.515\\
     8 & - & - & 1.398\\
\end{tabular}
\end{table}

As in Example \ref{ex:vdp},  we can see that returning to original coordinates after a finite bound is computed with $k = 8$ yields much better results. Since there are fewer variables in the original dynamics and the degree of the vector field is lower, we are able to increase the degree of test functions $k$ in the original coordinates compared to the compactified coordinates to obtain sharper bounds.

\end{example}

\section{Conclusion and perspectives}
In this work, we proposed a reachability based formulation of the extreme value estimation problem over an infinite time horizon. This work can also be used to obtain non-sharp upper bounds for the state norm, which is valuable in its own right since many existing methods rely on the availability of such a priori bounds. For the infinite dimensional measure LP, we proved a no relaxation gap result by establishing a superposition principle adapted to the infinite horizon setting with a time distributed terminal measure. The proposed method naturally handles both noncompact state spaces and infinite time horizons within a unified measure theoretic framework. The resulting formulation is posed entirely on compact semialgebraic sets and therefore admits systematic approximation by the moment-SOS hierarchy.

The main practical challenge lies in the numerical implementation. Besides the discount factor $\alpha$, the algorithm requires numerical tolerances $\delta$ and $\varepsilon$, whose choice influences both efficiency and conservativeness of the computed bounds. In particular, choosing $\delta$ is especially delicate as setting the threshold too low can make the upper bounds unnecessarily conservative, while setting it too high could make the upper bound drop below the true extreme value. Setting it close to the solver's tolerance seems a reasonable choice.

The algorithm proposed in this work outputs a sequence of numerical upper bounds on the extreme value. However, we could not prove that this sequence converges to the extreme value. In fact, our preliminary analysis indicates that no such algorithm exists. Concretely, we conjecture that the extreme value of a dynamical system with a polynomial vector field over an infinite time horizon is not upper semicomputable in the Turing's computation model.

Possible future directions could include extension of this framework to controlled and time varying systems or propose a polynomial optimization formulation which produces a true certificate of blow up. In addition, a detailed analysis of the influence of the discount factor $\alpha$ could be done, where adaptive strategies could even be considered to choose the discount factor instead of fixing it a priori. Another possible future direction could be the analysis of the robustness of the numerical solution with respect to the parameter $\delta$. Moreover, a possible extension could be to weaken the assumptions of the superposition principle and prove it in a more general setting.

\section{Acknowledgments}
The authors would like to thank Didier Henrion, Martin Kru\v z\'{\i}k and Rodolfo R\'{\i}os-Zertuche for their valuable ideas and discussions throughout the development of this contribution. We would especially like to thank Rodolfo for his comments and long discussions regarding the superposition principle. This work was funded by the European Union/M\v{S}MT \v{C}R  under the ROBOPROX project (reg.~no.~CZ.02.01.01/00/22 008/0004590) and by the Grant Agency of the Czech Technical University in Prague (grant No. SGS25/145/OHK3/3T/13). The authors acknowledge the use of AI for assistance with brainstorming ideas, mathematical development, coding and drafting the manuscript. The final content, analysis and conclusions remain the sole responsibility of the authors.

\appendix

\section{Proof of Theorem \ref{thm:superposition}}

\begin{proof}
 Extend \(f\) from $\X$ to all of $\R^n$ in such a way that this extension (still denoted by $f$) is globally Lipschitz. Let \(\Phi_t:\mathbb R^n\to\mathbb R^n\), \(t\in\mathbb R\), denote its flow, so that \(\Phi_t(x_0)=\gamma(t)\), where \(\gamma(0)=x_0 \in \X_0\). In particular, \(\Phi_{-t}\) denotes the backward flow. Since the measures in Theorem~\ref{thm:superposition} are supported on $\X$, these measures satisfy the Liouville's equation~\eqref{eq_liouville_standard} with the extended vector field. At the end of the proof, we verify that the resulting trajectories in the support of the constructed measure $\eta$ remain in \(\mathcal X\), so the proof is independent of the chosen extension. 
 
 From now on, we suppose that $f$ is $C^1$ and globally Lipschitz. The case of $f$ being only globally Lipschitz follows by standard mollification arguments.

Throughout the proof we assume that we are given a triplet of measures $(\mu_0, \mu, \mu_{\mathrm T})$ satisfying the assumptions of Theorem~\ref{thm:superposition}, in particular satisfying Liouville's equation~\eqref{eq_liouville_standard}. We regard these measures as measures on the ambient Euclidean spaces, extended by zero outside of their supports.
 
\paragraph{Relation between $\mu_0$ and $\mu_{\mathrm T}$} In the first step, we derive a relation between the measures $\mu_0$ and $\mu_{\mathrm T}$. Write
\[
L\varphi(t,x)
:=
\frac{\partial \varphi(t,x)}{\partial t}+\nabla_x\varphi(t,x)\cdot f(x).
\]

We first identify the part of the initial mass which stops at a finite time.
For \((\tau,x)\in\Rp\times \R^n\), set
\[
B(\tau,x):=(0, \Phi_{-\tau}(x)).
\]
We claim that
\begin{equation}
B_\#\mu_{\mathrm T}\le \mu_0 .
\end{equation}
Let \(a\in C^1_c(\R^n)\), \(a\ge0\). Let \(\chi_R\in C_c^1(\Rp)\) satisfy
\[
0\le \chi_R\le1,\qquad
\chi_R(t)=1\ \text{for }0\le t\le R,\qquad
\chi_R'(t)\le0,\qquad \chi_{R} \le \chi_{R+1}
\]
and choose the family so that $\lim_{R\to\infty}\chi_R(t)=1$ for every
\(t\in\Rp\). Use the test function 
\[
\varphi_R(t,x):=\chi_R(t)a(\Phi_{-t}(x)).
\]
Since \(L(a(\Phi_{-t}(x)))=0\), we have
\[
L\varphi_R(t,x)=\chi_R'(t)a(\Phi_{-t}(x)).
\]
Inserting \(\varphi_R\) in the weak equation gives
\[
\int_{\Rp\times\R^n}\chi_R'(t)a(\Phi_{-t}(x))\,d\mu(t,x)
+
\int_{\{0\}\times\Xzero}a(x)\,d\mu_0(t, x)
-
\int_{\Rp\times\R^n}\chi_R(t)a(\Phi_{-t}(x))\,d\mu_{\mathrm T}(t,x)
=0.
\]
Because \(a\ge0\) and \(\chi_R'\le0\), it follows that
\[
\int_{\Rp\times\R^n}\chi_R(t)a(\Phi_{-t}(x))\,d\mu_{\mathrm T}(t,x)
\le
\int_{\{0\}\times\Xzero}a(x)\,d\mu_0(t, x).
\]
Letting \(R\to\infty\) and using monotone convergence yields
\begin{equation}
\int_{\Rp\times\R^n}a(\Phi_{-\tau}(x))\,d\mu_{\mathrm T}(\tau,x)
\le
\int_{\{0\}\times\Xzero}a(x)\,d\mu_0(t, x).
\end{equation}
This proves \(B_\#\mu_{\mathrm T}\le\mu_0\). 
In particular, since pushforward preserves total mass and \(B_{\#}\mu_{\mathrm T}\leq\mu_0\), it follows that \(\mu_{\mathrm T}(\mathbb R_+\times\mathcal X)\leq\mu_0(\{0\}\times\mathcal X_0)<\infty\).

\paragraph{Residual measure} Define the residual measure
\begin{equation}
\chat{\mu_\infty^0:=\mu_0-B_\#\mu_{\mathrm T}.}
\end{equation}
Thus \chat{\(\mu_\infty^0\)} is a finite measure supported on \ks{\(\{0\}\times\Xzero\)}. It
represents the part of the initial mass which never reaches the finite-time
terminal measure.

\paragraph{Candidate occupation measure} Define a positive Radon measure \(\bar\mu\) on \(\Rp\times\R^n\) by
\begin{align}\label{eq:mubar_rep}
\int_{\Rp\times\R^n} h(t,x)\,d\bar\mu(t,x)
:={}&
\int_{\Rp\times\R^n}
\left(
\int_0^\tau h(s,\Phi_{s-\tau}(x))\,ds
\right)d\mu_{\mathrm T}(\tau,x)
\\
&+
\int_{\{0\}\times\Xzero}
\left(
\int_0^\infty h(s,\Phi_s(x))\,ds
\right)d\chat{\mu_\infty^0}(\tau, x)\notag
\end{align}
for every bounded compactly supported Borel function \(h:\Rp\times\R^n\to\R\). 
We claim that \((\mu_0,\bar\mu,\mu_{\mathrm T})\) satisfies the Liouville's equation~\eqref{eq_liouville_standard}. Let \(\varphi\in C_c^1(\Rp\times\R^n)\) have compact support in the time
variable. For the finite-terminal part,
\[
\int_0^\tau
L\varphi(s,\Phi_{s-\tau}(x))\,ds
=
\varphi(\tau,x)-\varphi(0,\Phi_{-\tau}(x)).
\]
For the non-stopping part,
\[
\int_0^\infty
L\varphi(s,\Phi_s(x))\,ds
=
-\varphi(0,x),
\]
because \(\varphi\) vanishes for all sufficiently large \(s\). Therefore
\begin{align*}
\int_{\Rp\times\R^n}L\varphi\,d\bar\mu
={}&
\int_{\Rp\times\R^n}\varphi(\tau,x)\,d\mu_{\mathrm T}(\tau,x)
-
\int_{\Rp\times\R^n}\varphi(0,\Phi_{-\tau}(x))\,d\mu_{\mathrm T}(\tau,x)
\\
&-
\int_{\{0\}\times\Xzero}\varphi(0,x)\,d\chat{\mu_\infty^0}(\tau, x).
\end{align*}
Using \chat{\(B_\#\mu_{\mathrm T}+\mu_\infty^0=\mu_0\)}, we obtain
\[
\int_{\Rp\times\R^n}L\varphi\,d\bar\mu
=
\int_{\Rp\times\R^n}\varphi(t,x)\,d\mu_{\mathrm T}(t,x)
-
\int_{\{0\}\times\Xzero}\varphi(0,x)\,d\mu_0(t, x).
\]
Hence
\begin{equation}
\int_{\Rp\times\R^n}L\varphi\,d\bar\mu
+
\int_{\{0\}\times\Xzero}\varphi(0,x)\,d\mu_0(t, x)
-
\int_{\Rp\times\R^n}\varphi(t,x)\,d\mu_{\mathrm T}(t,x)
=0.
\end{equation}
Thus \(\bar\mu\) is feasible with the same \(\mu_0\) and \(\mu_{\mathrm T}\).

\paragraph{Uniqueness} We now prove that $\bar\mu = \mu$ using a uniqueness argument. Let
\[
\lambda:=\mu-\bar\mu .
\]
Then \(\lambda\) is a signed Radon measure and
\begin{equation}
\int_{\Rp\times\R^n} L\varphi\,d\lambda=0
\end{equation}
for every \(\varphi\in C_c^1(\Rp\times\R^n)\). Let \(h\in C_c^1(\Rp\times\R^n)\)  and
define
\begin{equation}
\varphi(t,x):=
-\int_t^\infty h(s,\Phi_{s-t}(x))\,ds .
\end{equation}
Then \(\varphi\) has \ks{compact support} and satisfies
\[
L\varphi(t,x)=h(t,x).
\]
Therefore
\[
\int_{\Rp\times\R^n} h\,d\lambda
=
\int_{\Rp\times\R^n} L\varphi\,d\lambda
=0.
\]
Since \(h\) was arbitrary and \(\lambda\) is a signed Radon measure, it follows
that
\begin{equation}
\mu=\bar\mu .
\end{equation}

\paragraph{Superposition measure} We now construct the measure on stopped and non-stopped curves. For
\((\tau,x)\in\Rp\times\R^n\), define the unique finite stopped curve
\begin{equation}\label{eq:curve1}
\gamma_{\tau,x}:[0,\tau]\to\R^n,
\qquad
\gamma_{\tau,x}(s):=\Phi_{s-\tau}(x).
\end{equation}
For \(x\in\Xzero\), define the unique infinite curve
\begin{equation}\label{eq:curve2}
\gamma_{\infty,x}:\Rp\to\R^n,
\qquad
\gamma_{\infty,x}(s):=\Phi_s(x).
\end{equation}
Note that the curves \eqref{eq:curve1} and \eqref{eq:curve2} are unique, because the vector field $f$ is Lipschitz.
Set
\begin{equation}
\eta
:=
\bigl((\tau,x)\mapsto(\tau,\gamma_{\tau,x})\bigr)_\#\mu_{\mathrm T}
+
\bigl((\tau, x)\mapsto(\infty,\gamma_{\infty,x})\bigr)_\#\chat{\mu_\infty^0} .
\end{equation}
Then
\[
(e_T)_\#\bigl(\eta|_{\{\tau<\infty\}}\bigr)=\mu_{\mathrm T},
\]
and
\[
(e_0)_\#\eta
=
B_\#\mu_{\mathrm T}+\chat{\mu_\infty^0}
=
\mu_0.
\]
Moreover, by the definition of \(\bar\mu\) and by \(\mu=\bar\mu\), for every
bounded Borel \(h:\Rp\times\R^n\to\R\) with compact support 
\[
\int_{\Rp\times\R^n}h(t,x)\,d\mu(t,x)
=
\int_{\Sinf}
\left(
\int_0^\tau h(t,\gamma(t))\,dt
\right)d\eta(\tau,\gamma(\cdot)),
\]
with the inner integral interpreted over \([0,\infty)\) if \(\tau=\infty\).

\ks{
\paragraph{Admissibility}
Let \((\tau,\gamma(\cdot))\in\mathcal S_\infty\) denote a stopped or non-stopped curve in the representation above. Since \(\mu\) is supported on \([0,\infty)\times\mathcal X\), the representation \eqref{eq:mubar_rep}, applied to a countable exhaustion of \([0,\infty)\times \X^c\) by bounded sets, implies that for \(\eta\)-almost every \((\tau,\gamma(\cdot))\), \(\gamma(t)\in\mathcal X\) for almost every \(t\in[0,\tau)\). If \(\gamma(t_0)\notin\mathcal X\) for some \(t_0<\tau\), then continuity of \(\gamma\) and closedness of \(\mathcal X\) give a nontrivial interval on which \(\gamma(t)\notin\mathcal X\), a contradiction. Hence \(\gamma(t)\in\mathcal X\) for all \(t<\tau\), and also at \(t=\tau\) when \(\tau<\infty\). Since the extension of \(f\) agrees with \(f\) on
\(\mathcal X\), we have \(\dot\gamma(t)=f(\gamma(t))\). Finally, \((e_0)_\#\eta=\mu_0\) implies \(\gamma(0)\in\mathcal X_0\) for
\(\eta\)-almost every \((\tau,\gamma(\cdot))\). Therefore, \(\eta\) is
concentrated on admissible trajectories.}
Finally,
\[
\eta(\Sinf)
=
\mu_{\mathrm T}(\Rp\times\X)+\chat{\mu_\infty^0}(\{0\}\times\Xzero)
=
\mu_0(\{0\}\times\Xzero).
\]
In particular, since \(\mu_0\) is a probability measure, \(\eta\) is a
probability measure as well.

\end{proof}

\begin{remark}[Finite-stopping case]
If
\[
B_\#\mu_{\mathrm T}=\mu_0,
\]
then \chat{\(\mu_\infty^0=0\)}, so \(\eta\) gives no mass to \(\tau=\infty\). In this case
the representation is purely by finite stopped trajectories. In particular,
this holds whenever
\[
\mu_{\mathrm T}(\Rp\times\X)=\mu_0(\{0\}\times\Xzero),
\]
because \(B_\#\mu_{\mathrm T}\le\mu_0\) and the two measures then have the same total
mass.
\end{remark}

\printbibliography

\end{document}